\newcommand{\field}[1]{\mathbb{#1}} 
\newcommand{\NN}{\field{N}}
\newcommand{\ZZ}{\field{Z}}
\newcommand{\Aa}{\mathcal{A}}
\newcommand{\Bb}{\mathcal{B}}
\newcommand{\Ff}{\mathcal{F}}
\newcommand{\Gg}{\mathcal{G}}
\newcommand{\Hh}{\mathcal{H}}
\newcommand{\Mm}{\mathcal{M}}
\newcommand{\Oo}{\mathcal{O}}
\newcommand{\Ss}{\mathcal{S}}
\newcommand{\la}{\langle}
\newcommand{\ra}{\rangle}
\newcommand{\ds}{\displaystyle}
\newcommand{\go}{\Gg^{(0)}}
\newcommand{\Bis}{\operatorname{Bis}}
\newcommand{\ob}{\overline{b}}
\newcommand{\od}{\overline{\partial}}
\newtheorem{thm}{Theorem}[section]
\newtheorem{cor}[thm]{Corollary}
\newtheorem{lem}[thm]{Lemma}
\newtheorem{prop}[thm]{Proposition}
\theoremstyle{definition}
\newtheorem{dfn}[thm]{Definition}
\theoremstyle{remark}
\newtheorem{rmk}[thm]{Remark}
\newtheorem{example}[thm]{Example}
\newtheorem*{examples*}{Examples}
\numberwithin{equation}{section}
\title{ Cohomology of ample groupoids }
\author{Valentin Deaconu}
\address{Department of Mathematics (084)\\ University
of Nevada\\ Reno NV 89557-0084\\ USA} \email{vdeaconu@unr.edu}
\author{Marius Ionescu}
\address{United States Naval Academy\\ Annapolis\\ MD 21402-5002\\ USA}
\email{ionescu@usna.edu}
\keywords{Ample groupoid homology and cohomology, exact sequences.}
\subjclass{Primary 22A22, 46L05; Secondary 55N91.}
\thanks{The views expressed in this paper are those of the authors 
and do not reflect the official policy or position of the U.S. Naval
Academy, Department of the Navy, the Department of Defense, or the
U.S. 
Government.}
\begin{document}
\begin{abstract}
We introduce a  cochain complex for  ample groupoids $\Gg$ using a flat resolution defining their homology with  coefficients in $\mathbb Z$. We prove that the cohomology of this cochain complex with values in a $\Gg$-module $M$ coincides  with the previously introduced  continuous cocycle cohomology of $\Gg$. In particular, this groupoid cohomology is invariant under Morita equivalence. We derive an exact sequence for the cohomology of skew products by a $\ZZ$-valued cocycle. We indicate how to compute the cohomology with coefficients in a $\Gg$-module $M$ for $AF$-groupoids and for certain action groupoids.  
\end{abstract}
\maketitle
\section{Introduction}

\bigskip

The cohomology of \'etale groupoids was first defined by Haefliger in Chapter III of \cite{H76}, using a complex of non homogeneous cochains with values in a sheaf.  In his thesis \cite{R80}, Renault defines the cohomology of a topological groupoid by using (normalized) continuous cocycles with values in a locally compact group bundle.  To define cohomology groups in connection with elementary $C^*$-bundles, Kumjian  is using sheaves and derived functors of the invariant section functor, see \cite{K88}. For a review of some of these definitions and the connection with the cohomology of small categories, see \cite{GK}. In \cite{T06},  Tu showed  that Haefliger's cohomology for \'etale groupoids, Moore's cohomology for locally compact groups and the Brauer group of a locally compact groupoid are particular cases of sheaf cohomology for topological simplicial spaces.

Recently, there has been significant progress in understanding the homology of ample groupoids $\Gg$ and their relationship with the $K$-theory of their $C^*$-algebra $C^*(\Gg)$, see for example \cite{M12, FKPS, BDGW, PY22}. There are connections with the dynamic asymptotic dimension of an \'etale groupoid and, using the unstable equivalence relation of a Smale space, with the homology of hyperbolic dynamical systems defined by I. Putnam, see \cite {PY23}. One key ingredient for ample groupoids is the fact that the category of $\Gg$-sheaves is equivalent to the category of $\Gg$-modules, see \cite{S14}. In a recent paper \cite{Li25}, X. Li constructed a spectrum whose homology groups recover groupoid homology, and proved the  $AH$ conjecture of Matui, in connection with the topological full group. Many of these results are proven for not necessarily Hausdorff groupoids. In this paper, all spaces and all groupoids that we consider are Hausdorff.

In the hope to facilitate concrete computations of cohomology groups, we dualize a resolution used for the homology  of ample groupoids, which appears  in \cite{M22}, see also \cite{CM} and Matui \cite{M12}. 

We begin with a review of the homology of ample groupoids and of $\Gg$-modules $M$ in section 2. In section 3, we define the cohomology groups $H^n(\Gg, M)$ using a dual complex. This section is using the equivalence of $\Gg$-sheaves and $\Gg$-modules for ample groupoids. Our first main result is Theorem \ref{thm:main}, where we prove that the cohomology  with values in a $\Gg$-module is isomorphic with the cohomology defined using cocycles. The main inspiration was the paper by Gillaspy and Kumjian \cite{GK}, where they work with sheaves instead of modules. 

In section 4, we  prove an exact sequence for computing the cohomology of skew products of ample groupoids by a $\ZZ$-valued cocycle. In section 5, we illustrate the theory with several examples, like the computation of cohomology for $AF$-groupoids and for certain action groupoids. 
 
 Recently, a preprint of Matui and Mori \cite{MM} explores the ring structure using the cup product on groupoid cohomology with integer coefficients  and the cap product between homology and cohomology. In their definition of cohomology, they use cocycles with values in an abelian group and the groupoid action is trivial.
 
 We hope that this paper will stimulate further research and connections with dynamical systems and with invariants of $C^*$-algebras.
 
 {\bf Acknowledgement}. We thank the referees for very detailed suggestions that helped to improve the quality of the paper. We also thank Alex Kumjian for helpful discussions.
\bigskip

\section{Homology  of ample groupoids and $\Gg$-modules}

\bigskip

In this section, we review the definition of homology of a groupoid $\Gg$ and of the concept of $\Gg$-module. We first recall  the definition of homology of ample (Hausdorff) groupoids which was introduced  in \cite{CM} in a more general framework,  and studied in \cite{M12} for the case of ample groupoids. Recall that  an ample groupoid $\Gg$ is an \'etale  groupoid such that its unit space 
$\Gg^{(0)}$ is totally disconnected. 

Let $A$ be a topological abelian group and
let $\pi: X \to Y$ be a local homeomorphism between two locally compact Hausdorff spaces. Denote by $C_c(X, A)$ the abelian group of continuous compactly supported functions with pointwise addition. Given $f\in C_c(X, A)$, define a map
\begin{equation}
  \label{eq:pi_star} 
  \pi_*:C_c(X,A)\to C_c(Y,A),\;\; \pi_*(f)(y):= \sum_{\pi(x)=y} f(x),
\end{equation}
which is a group homomorphism.

For  an \' etale groupoid $\Gg$, let $\Gg^{(1)}=\Gg$ and for $n\ge 2$, let $\Gg^{(n)}$ be the space of composable strings $(g_1,g_2,...,g_n)$
of $n$ elements in $\Gg$, with the product topology. For $n\ge 2$ and  $ i = 0,...,n$, we let $\partial^n_i : \Gg^{(n)} \to \Gg^{(n-1)}$ be the face maps defined by

\[\partial^n_i(g_1,g_2,...,g_n)=\begin{cases}(g_2,g_3,...,g_n)&\;\text{if}\; i=0,\\
 (g_1,...,g_ig_{i+1},...,g_n) &\;\text{if} \;1\le i\le n-1\\
(g_1,g_2,...,g_{n-1})& \;\text{if}\; i = n.\end{cases},\]
which are local homeomorphisms.
Consider the homomorphisms of abelian groups $d_n : C_c(\Gg^{(n)}, A) \to C_c(\Gg^{(n-1)}, A)$ given by 
\begin{equation}
\label {diff}
d_1=s_*-r_*,\;\; d_n=\sum_{i=0}^n(-1)^i\partial^n_{i*}\; \text{for}\; n\ge 2.
\end{equation}
Recall that $s,r: \Gg\to \Gg^{(0)}$ are the source and the range maps and \[\partial^n_{i*}: C_c(\Gg^{(n)},A)\to C_c(\Gg^{(n-1)},A), \]\[ \partial^n_{i*}(f)(g_1,...,g_{n-1})=\sum_{\partial^n_i(h_1,h_2,...,h_n)=(g_1,...,g_{n-1})}f(h_1,h_2,...,h_n).\]
It can be verified that the differentials $d_n$ satisfy $d_n\circ d_{n+1}=0$ for all $n\ge 1$.

The homology groups $H_n(\Gg, A)$ are by definition  the homology groups of the chain complex $C_c(\Gg^{(*)},A)$ given by
\[
0\stackrel{d_0}{\longleftarrow} C_c(\Gg^{(0)},A)\stackrel{d_1}{\longleftarrow}C_c(\Gg^{(1)},A)\stackrel{d_2}{\longleftarrow}C_c(\Gg^{(2)},A)\longleftarrow\cdots,\]
i.e. $H_n(\Gg,A)=\ker d_n/\text{im } d_{n+1}$. We write $H_n(\Gg)$ for $H_n(\Gg,\ZZ)$.

When  $\Gg$ is a discrete group $G$, the above chain complex coincides with the standard bar complex and  $H_*( G, A)$ recovers the group homology with coefficients in $A$, where $A$ becomes a $G$-module with trivial action (see \cite{Br} for example). 

It is known that two Morita equivalent \'etale groupoids have the same homology, see section 3 in \cite{CM}. For different kinds of equivalence of groupoids, including Kakutani equivalence and similarity of groupoids, see section 3 in \cite{FKPS}. In particular, the homology of a proper principal groupoid is isomorphic to the homology of the orbit space. 

An \'etale  groupoid homomorphism $\phi:\Gg_1\to \Gg_2$ induces local homeomorphisms 
\[\phi^{(n)}:\Gg_1^{(n)}\to \Gg_2^{(n)},\;\; \phi^{(n)}(g_1,...,g_n)=(\phi(g_1),...,\phi(g_n))\] and maps $\phi_*^{(n)}:C_c(\Gg_1^{(n)},A)\to C_c(\Gg_2^{(n)},A)$, 
\begin{equation}
\label{eq phi}
\phi_*^{(n)}(f)(h_1,...,h_n)=\sum_{\phi^{(n)}(g_1,...,g_n)=(h_1,..., h_n)}f(g_1,..., g_n)
\end{equation}
for $(g_1,...,g_n)\in \Gg_1^{(n)}$ and $(h_1,...,h_n)\in \Gg_2^{(n)}$, which commute with the differentials. Therefore, being a local homeomorphism, the homomorphism $\phi$ induces  homology group homomorphisms, denoted $\phi_*:H_*(\Gg_1,A)\to H_*(\Gg_2,A)$, and $\phi\mapsto \phi_*$ preserves composition.  As a consequence, if $\{\Gg_n\}_{n\ge 1}$ is an increasing sequence of open subgroupoids of $\Gg$ such that $\ds \Gg=\bigcup_{n=1}^\infty \Gg_n$, then $H_*(\Gg,A)\cong \varinjlim H_*(\Gg_n,A)$. For example, the homology of an $AF$-groupoid can be computed using inductive limits. 

 We write $\Bis(\Gg)$ for the set
of compact open bisections of an ample groupoid. 
Since an ample  groupoid has  a basis of
compact open bisections,  the (free) abelian group $C_c(\Gg,\ZZ)$ consists of
locally constant functions with compact open support. It is generated
by the indicator functions $\chi_U$ of compact open bisections. In
\cite{BDGW}, \cite{M23} and in other papers, $C_c(\Gg,\ZZ)$ is denoted by
$\ZZ[\Gg]$  and it has a ring structure with multiplication given by
convolution: for $f_1,f_2\in \ZZ[\Gg]$, 
\[(f_1 f_2)(g)=\sum_{h\in \Gg_{r(g)}}f_1(h^{-1})f_2(hg),\] where $\Gg_u=\{g\in \Gg: s(g)=u\}$. This ring has local units, in the sense that for any finite collection $f_1,...,f_n$ of elements in $\ZZ[\Gg]$, there is an idempotent $e\in \ZZ[\Gg]$ such that $e f_i=f_i e=f_i$ for each $i=1,...,n$. One can take $e=\chi_U$ for a certain compact open set $U\subseteq \Gg^{(0)}$. 

\begin{dfn}For $\Gg$ an ample groupoid, a $\Gg$-module is a (left) $\ZZ[\Gg]$-module $M$ (assumed non-degenerate in the sense that $\ZZ[\Gg]M=M$). 
\end{dfn}

\begin{dfn}\label{sa}  A topological groupoid $\Gg$ is said to act
(on the left) on a locally compact space $X$, if there are given 
a continuous  surjection $p : X \rightarrow \Gg^{(0)}$,  called the anchor or moment map,
and a continuous map
\[\Gg\ast X \rightarrow X, \quad\text{write}\quad (g , x)\mapsto
g \cdot x=gx,\]
where
\[\Gg \ast X = \{(g , x)\in \Gg \times X \mid s(g) =p (x)\},\]
that satisfy

\medskip

i) $p (g \cdot x) =r(g)$ for all $(g , x) \in \Gg \ast X,$

\medskip

ii) $(g _2, x) \in \Gg \ast X,\,\, (g_1, g_2)
\in \Gg ^{(2)}$ implies $(g _1g _2, x),
(g _1, g _2\cdot x) \in \Gg * X$ and
\[g _1\cdot(g _2\cdot x) = (g _1g _2)
\cdot x,\]

\medskip

iii) $p (x)\cdot x = x$ for all $x\in X$.

\end{dfn}

The  locally compact Hausdorff spaces $X$ on which an ample groupoid $\Gg$ acts  such that the anchor map $p:X\to \Gg^{(0)}$ is a local homeomorphism are also called $\Gg$-sheaves of sets (see Definition 3.9 from \cite{GK}) or \'etale $\Gg$-spaces.  They provide important examples of $\Gg$-modules, with the left action given by
\[(f\cdot m)(x)=\sum_{g\in \Gg_{p(x)}}f(g^{-1})m(g\cdot x)\] for $f\in \ZZ[\Gg]$ and $m\in \ZZ[X]=C_c(X,\ZZ)$. Note that $X$ is also totally disconnected.

We will use the following construction and lemma later. Let $X$ be a
left $\Gg$-space such that the anchor map $p$ is a local 
homeomorphism. For $x\in X$ and $V$  a compact open subset of
$X$ such that $x\in V$ and $p|_V:V\to p(V)$ is a homeomorphism, we
define the element $\langle x\rangle_V\in C_c(X,\ZZ)$ to be the
indicator function of $V$:
\begin{equation}
  \label{eq:x_V}
  \langle  x \rangle_V(y)=\chi_V(y)=
  \begin{cases}
    1 & \text{ if }y\in V\\
    0 & \text{ otherwise.}
  \end{cases}
\end{equation}
Note that $\la x\ra_V(x)=1$.

\begin{lem}\label{lem:section_xV}
  Assume that $X$ is a left $\Gg$-space such that the anchor map $p$ is a local 
homeomorphism. Let $x\in X$ and let $V$ be as in the paragraph preceding this
lemma. Let $g\in \Gg$ such that $s(g)=p(x)$ and let $U\in \Bis(\Gg)$
such that $g\in U$
. Then
\begin{equation}
  \label{eq:g_x_V}
  \chi_U\cdot \langle x \rangle_V=\langle  g\cdot x \rangle_{UV},
\end{equation}
where $UV=\{h\cdot y\,:\,h\in U,y\in V,s(h)=p(y)\}$.
\end{lem}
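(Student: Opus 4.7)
\medskip

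The plan is to verify the identity pointwise on $X$. Fix $y\in X$ and unwind the module action at $y$:
\[
(\chi_U\cdot\langle x\rangle_V)(y)=\sum_{h\in \Gg_{p(y)}}\chi_U(h^{-1})\,\chi_V(h\cdot y).
\]
A term in this sum is nonzero only when $h^{-1}\in U$ and $h\cdot y\in V$. Reparametrizing by $k:=h^{-1}\in U$, the condition $s(h)=p(y)$ becomes $r(k)=p(y)$, so I am looking for $k\in U$ with $r(k)=p(y)$ and $k^{-1}\cdot y\in V$. The first decisive observation is that because $U$ is a bisection, $r\colon U\to r(U)$ is injective; hence there is \emph{at most one} such $k$, and it exists if and only if $p(y)\in r(U)$.

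Now I compare this to $\chi_{UV}(y)$. If $y\in UV$, write $y=h\cdot y'$ with $h\in U$, $y'\in V$ and $s(h)=p(y')$; then $r(h)=p(y)$, and by the bisection property $h$ is the unique element of $U$ over $p(y)$, while $h^{-1}\cdot y=y'\in V$, giving the single contribution $1$ to the sum. Conversely, if the sum contributes $1$ via some $k\in U$ with $y':=k^{-1}\cdot y\in V$, then $y=k\cdot y'\in UV$. This gives the pointwise equality of the two functions. Uniqueness of the decomposition $y=h\cdot y'$ (again from the bisection property) also shows that $UV$ is parametrized bijectively by $U*_{s,p}V$, so $\chi_{UV}$ is genuinely $\{0,1\}$-valued.

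Finally, I need to justify that $\langle g\cdot x\rangle_{UV}$ is a legitimate expression in the sense of \eqref{eq:x_V}. Since $U$ is a compact open bisection, the restricted action
\[
U*_{s,p}V\longrightarrow UV,\qquad (h,y')\mapsto h\cdot y',
\]
is a continuous bijection between a compact Hausdorff space and its image, hence a homeomorphism; this makes $UV$ compact open in $X$. Moreover $p|_{UV}$ factors as the composition of the homeomorphism $UV\to U$, $h\cdot y'\mapsto h$, with $r|_U\colon U\to r(U)$, both homeomorphisms, so $p|_{UV}\colon UV\to p(UV)$ is a homeomorphism. Since $g\in U$ and $x\in V$ with $s(g)=p(x)$, we have $g\cdot x\in UV$, which completes the verification.

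\smallskip
The only real subtlety is keeping track of the $h\mapsto h^{-1}$ inversion built into the definition of the action; once that is done, the statement reduces to the fact that bisections give unique lifts, which is exactly what makes the single-term collapse of the convolution work.
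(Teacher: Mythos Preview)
Your proof is correct and follows essentially the same route as the paper's own argument: a pointwise verification of the identity using the fact that $U$ is a bisection to force uniqueness of the contributing term in the convolution. You supply considerably more detail than the paper does---in particular, you spell out why $UV$ is compact open and why $p|_{UV}$ is a homeomorphism, which the paper simply asserts---but the underlying approach is the same.
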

\begin{proof}
  First note that $UV$ is a compact open subset of $X$ and the
  restriction of $p$ to $UV$ is a homeomorphism onto $p(UV)$.
  
  For $y\in X$ we have
  \[
    \chi_U\cdot \langle  x
    \rangle_V(y)=\sum_{s(h)=p(y)}\chi_U(h^{-1})\langle x \rangle_V(h\cdot
    y).
  \]
  Therefore $\chi_U\cdot \langle  x \rangle_V(y)=1$ if and only if there
  are (unique) $h\in U^{-1}$ and $z\in V$ such that $y=h^{-1}z$. The
  conclusion follows.
\end{proof}

Since $\Gg$ acts on the left on $\Gg^{(n)}$ using the anchor map \[p:
  \Gg^{(n)}\to \Gg^{(0)}, \;\;p(g_1, g_2,...,g_n)=r(g_1)\] such
that  \[g\cdot(g_1,g_2,...,g_n)=(gg_1,g_2,...,g_n)\] for
$s(g)=r(g_1)$, 
the abelian groups $\ZZ[\Gg^{(n)}]$ become  $\Gg$-modules in a natural way. 

\begin{rmk}
To define the homology of an ample groupoid with values in a $\Gg$-module $M$, Miller (see Example 2.14 in \cite{M23} or Chapter 4 in \cite{M22})  is using a flat resolution of the $\Gg$-module $\ZZ[\Gg^{(0)}]$ as in \cite{Li25}, called the bar resolution:
\begin{equation}\label{bar}
\cdots\stackrel{b_{n+1}}{\longrightarrow}\ZZ[\Gg^{(n+1)}]\stackrel{b_n}{\longrightarrow}\ZZ[\Gg^{(n)}]\stackrel{b_{n-1}}{\longrightarrow}\cdots\stackrel{b_1}{\longrightarrow} \ZZ[\Gg^{(1)}]\stackrel{b_0}{\longrightarrow}\ZZ[\Gg^{(0)}]\to 0.
\end{equation}
This resolution is in fact projective when the unit space of $\Gg$ is $\sigma$-compact and Hausdorff, see \cite{BDGW}.
 For $n\ge 1$ and $0\le i\le n$, let $b_i^n:\Gg^{(n+1)}\to \Gg^{(n)}$ be such that
\[b_i^n(g_0,...,g_n)=\begin{cases}(g_0,...,g_ig_{i+1},...,g_n)&\;\text{if}\; i<n,\\
 (g_0,...,g_{n-1})& \;\text{if}\; i = n.\end{cases}\]
 The maps $b_i^n$ are $\Gg$-equivariant local homeomorphisms and induce $\Gg$-module maps $b^n_{i*}:\ZZ[\Gg^{(n+1)}]\to \ZZ[\Gg^{(n)}]$. Then  $b_n:\ZZ[\Gg^{(n+1)}]\to \ZZ[\Gg^{(n)}]$ for $n\ge 1$ are given by 
\begin{equation}\label {bardiff}
b_n=\sum_{i=0}^n(-1)^ib^n_{i*}
\end{equation}
and let $b_0=s_*:\ZZ[\Gg^{(1)}]\to\ZZ[\Gg^{(0)}]$.
 The exactness of the bar resolution \eqref{bar} is witnessed by a chain homotopy induced by local homeomorphisms
  \[h_n:\Gg^{(n)}\to \Gg^{(n+1)},\; h_n(g_0,...,g_{n-1})=(r(g_0),g_0,...,g_{n-1})\;\text{for}\; n\ge 1,\]
  with $h_0:\Gg^{(0)}\to \Gg$ being the inclusion. 
  
 The coinvariants of a $\Gg$-module $M$ is the abelian group $M_\Gg=\ZZ[\Gg^{(0)}]\otimes_\Gg M$. The coinvariants of the $\Gg$-module $\ZZ[\Gg^{(n+1)}]$ for $n\ge 1$ is isomorphic to $\ZZ[\Gg^{(n)}]$. 
 Taking the coinvariants of the above bar resolution, one obtains the new chain complex
  \[\cdots\stackrel{(b_{n+1})_{\Gg}}{\longrightarrow}\ZZ[\Gg^{(n)}]\stackrel{(b_n)_{\Gg}}{\longrightarrow}\ZZ[\Gg^{(n-1)}]\longrightarrow\cdots\stackrel{(b_2)_{\Gg}}{\longrightarrow} \ZZ[\Gg^{(1)}]\stackrel{(b_1)_{\Gg}}{\longrightarrow}\ZZ[\Gg^{(0)}]\to 0,\]
  where $(b_n)_{\Gg}$ are in fact the differentials $d_n$ as in \eqref{diff} defined using the face maps $\partial^n_i:\Gg^{(n)}\to \Gg^{(n-1)}$ for $n\ge 2$. The homology of this new chain complex computes $H_*(\Gg,\ZZ)\cong\text{Tor}^{\Gg}_*(\ZZ[\Gg^{(0)}], \ZZ[\Gg^{(0)}])$. A similar resolution of $M$ can be used to compute $H_*(\Gg, M)\cong\text{Tor}^{\Gg}_*(\ZZ[\Gg^{(0)}],M)$.
\end{rmk}
\bigskip

 \section{ Cohomology of ample groupoids}
 \bigskip

In this section, we obtain the first main result, relating the
cohomology of an ample groupoid defined using a cochain complex with
the cocycle cohomology. Many facts are just a reinterpretation of
results in \cite{GK} from the context of sheaves to the context of
modules using the equivalence between $\Gg$-sheaves and $\Gg$-modules
for ample groupoids proved in \cite{S14}. While certain results in
this section could be derived from the previously cited works, we
present complete proofs by tailoring the general theory to our
specific case. Our primary motivation for this approach is to offer
concrete formulas that directly facilitate the cohomology computations
for the examples that we analyze. 

\begin{dfn}
Let $\Gg$ be an ample groupoid and let $M$ be any
$\Gg$-module. Consider the dual complex
Hom$_{\Gg}(\ZZ[\Gg^{(*)}],M)=\text{Hom}_{\ZZ[\Gg]}(\ZZ[\Gg^{(*)}],M)$
with (co)differentials \[\delta_n:\text
  {Hom}_{\Gg}(\ZZ[\Gg^{(n+1)}],M)\to
  \text{Hom}_{\Gg}(\ZZ[\Gg^{(n+2)}],M),
  \delta_n(\varphi)=\varphi\circ b_{n+1}\] for $\varphi\in \text
  {Hom}_{\Gg}(\ZZ[\Gg^{(n+1)}],M)$, where $b_n :
\ZZ[\Gg^{(n+1)}] \to \ZZ[\Gg^{(n)}]$ are the differentials
defined in \eqref{bardiff}. We define the cohomology groups $H^n(\Gg,M)$
as the cohomology of this dual complex,
i.e. $H^n(\Gg,M)=\ker\delta_n/\text{im }\delta_{n-1}$. 
\end{dfn}

We will see below how this groupoid cohomology relates to previous
versions of cohomology. The main result of this section, Theorem
\ref{thm:main}, proves that our cohomology with coefficients in a $\Gg$-module $M$ is isomorphic with the
cohomology defined using  continuous cocycles with values in a specific
$\Gg$-sheaf $\Mm$ of abelian groups.

Recall that a sheaf of abelian groups over a   space $X$ is a
topological space $\Aa$ with a local homeomorphism $\pi:\Aa\to X$ such
that each fiber $\Aa_x=\pi^{-1}(x)$ is an abelian group and the group
operations are continuous. If $\Gg$ is an \'etale 
groupoid, a $\Gg$-sheaf is a sheaf $\Aa$ over $\Gg^{(0)}$ such that
for each $g\in \Gg$ there are isomorphisms $\alpha_g:\Aa_{s(g)}\to
\Aa_{r(g)}$ with the properties 
\[x\in \Gg^{(0)}\Rightarrow \alpha_x=\text{id},\;\;(g_1,g_2)\in
  \Gg^{(2)}\Rightarrow
  \alpha_{g_1}\circ\alpha_{g_2}=\alpha_{g_1g_2},\]\[\alpha:\Gg\ast\Aa\to\Aa,\;
  (g,a)\mapsto \alpha_g(a)\;\text{is continuous}.\] 
We write $g\cdot a$ for $\alpha_g(a)$ as is customary.

\begin{rmk}\label{sheaf-mod}
In Theorem 3.5 of \cite{S14} it is proved that for (not necessarily Hausdorff) ample groupoids,
the category of (right) $\Gg$-sheaves is equivalent to the category of (right)
non-degenerate $\Gg$-modules. We choose to consider left $\Gg$-modules and left $\Gg$-sheaves with $\Gg$ Hausdorff, so we adapt the formulas accordingly. 

Specifically, given a $\Gg$-sheaf $\Aa$ with $\pi:\Aa\to \Gg^{(0)}$,
the space $\Gamma_c(\Aa,\pi)$ of compactly supported continuous
sections $\xi:\Gg^{(0)}\to \Aa$ becomes a $\Gg$-module using 
\[(f\xi)(x)=\sum_{r(g)=x} f(g)(g\cdot \xi(s(g)))\]
for $f\in \ZZ[\Gg]$. Conversely, any $\Gg$-module $M$ determines a $\Gg$-sheaf $\Mm$ by using the compact open subsets $U$ of $\Gg^{(0)}$ to define the fibers (or germs) $\ds M_x=\varinjlim_{x\in U} \chi_UM$ and then $\ds \Mm=\bigsqcup_{x\in \Gg^{(0)}}M_x$ becomes a $\Gg$-sheaf with appropriate topology and $\Gg$-action. A basis for the topology on $\Mm$ is given by the sets
\[(U,m)=\{[m]_x:x\in U\},\]
where $U\subseteq \Gg^{(0)}$ is compact open and $[m]_x\in M_x$ denotes the image of $m\in \chi_UM$ in the inductive limit. The $\Gg$-action is defined by \[g\cdot [m]_{s(g)}=[\chi_Vm]_{r(g)},\] where $V$  is a compact open bisection with $g\in V$.
Moreover, the proof of
\cite[Theorem 3.5]{S14} implies that the module $M$ is isomorphic with
the module $\Gamma_c(\Mm,\pi)$ via the isomorphism $\eta_M:M\to
\Gamma_c(\Mm,\pi)$, $\eta_M(m)=s_m$, where $\pi:\Mm\to \Gg^{(0)}$ is
the projection and $s_m(x)=[m]_x$ for all $x\in \Gg^{(0)}$. We will
describe the isomorphism between the $\Gg$-sheaf morphisms and
$\Gg$-module morphisms that we study in Proposition \ref{lem:hom_isom}.

\end{rmk}

\begin{rmk}

Let $\Gg$ be any \'etale groupoid and let $p:Y\to \go$ be an \'etale
$\Gg$-space. It is proven in \cite{GK} that 
there is a $\Gg$-sheaf denoted $\ZZ[Y]$  with the stalk at $x\in
\Gg^{(0)}$ given by the free abelian group $\ZZ[Y_x]$ generated by the
fiber $Y_x:=p^{-1}(x)$ and the topology as described in \cite[\S 3.1]{GK}. 
For any  \'etale groupoid $\Gg$, in particular for any ample groupoid, note that
$\Gg^{(n)}$ is  a $\Gg$-sheaf of sets. 

The notation from \cite{GK} is related  to our notation, but
unfortunately is not the same. To distinguish between the $\Gg$-module
$\ZZ[Y]=C_c(Y,\ZZ)$ and the $\Gg$-sheaf $\ZZ[Y]$ from \cite{GK}, we
will use $\ZZ[Y]^s$ for the latter. We write $p^s$ for the
projection of $\ZZ[Y]^s$ onto $\go$.

\end{rmk}

The following lemma provides a concrete presentation of Steinberg's
construction (\cite{S14}) as reviewed in Remark \ref{sheaf-mod}
applied to the
$\Gg$-sheaf
$\ZZ[Y]^s$ for any \'etale $\Gg$-space $Y$ and, in particular, for
$Y=\Gg^{(n)}$, where $p: \Gg^{(n)}\to \Gg^{(0)}, \;\;p(g_1,
g_2,...,g_n)=r(g_1)$. Specifically, the lemma  identifies  
the module of sections associated to the $\Gg$-sheaf
$\ZZ[Y]^s$ with the $\Gg$-module $\ZZ[Y]$.

\begin{lem}\label{lem:isom_gamma_n}
  Assume that $\Gg$ is an ample groupoid and $p:Y\to \go$ is an \'etale $\Gg$-space.
  The map $\Phi:\ZZ[Y]\to \Gamma_c(\ZZ[Y]^s,p^s)$
  defined via
  \[
  \Phi(m)(x)=\sum_{p(y)=x}m(y)[y],
  \]
  for all $m\in \ZZ[Y]$ is an isomorphism of $\Gg$-modules, where $[y]$ is the
  generator determined by $y\in Y$ in the free abelian group
  $\ZZ[Y_x]$.
\end{lem}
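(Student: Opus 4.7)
The plan is to verify, in order, that (i) $\Phi$ is well-defined (the target really lies in $\Gamma_c(\ZZ[Y]^s,p^s)$), (ii) $\Phi$ is a group homomorphism (immediate from fiberwise linearity), (iii) $\Phi$ is $\Gg$-equivariant, (iv) $\Phi$ is injective, and (v) $\Phi$ is surjective. Only (i), (iii), and (v) require genuine argument.

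For well-definedness I would first treat the generators. Since $p:Y\to \go$ is a local homeomorphism and $m\in \ZZ[Y]=C_c(Y,\ZZ)$ is locally constant with compact support, $m$ can be written as a finite $\ZZ$-linear combination $m=\sum_i n_i\chi_{V_i}$ where each $V_i$ is a compact open subset of $Y$ on which $p$ restricts to a homeomorphism onto $p(V_i)$. For such a $V_i$, the section $x\mapsto [p|_{V_i}^{-1}(x)]$ for $x\in p(V_i)$, extended by $0$ elsewhere, is continuous by the very definition of the topology on $\ZZ[Y]^s$ (basic open sets are of the form $\{[y]:y\in V\}$ for $V$ as above) and has compact support in $p(V_i)$. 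Hence $\Phi(\chi_{V_i})\in \Gamma_c(\ZZ[Y]^s,p^s)$, and by $\ZZ$-linearity so does $\Phi(m)$.

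Injectivity is immediate: if $\Phi(m)(x)=0$ for every $x\in \go$, then freeness of $\ZZ[Y_x]$ on the basis $\{[y]:p(y)=x\}$ forces $m(y)=0$ for every $y$. For $\Gg$-equivariance, by bilinearity it suffices to verify $\Phi(\chi_U\cdot \chi_V)=\chi_U\cdot \Phi(\chi_V)$ for $U\in \Bis(\Gg)$ and $V$ a compact open subset of $Y$ with $p|_V$ a homeomorphism. The left-hand side is computed from Lemma \ref{lem:section_xV} applied to $Y$: $\chi_U\cdot\chi_V=\chi_{UV}$ (after identifying indicator functions with the $\la\cdot\ra_V$ notation), so $\Phi(\chi_U\cdot \chi_V)(x)=[p|_{UV}^{-1}(x)]$ on $p(UV)$. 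On the other side, using the $\Gg$-action on $\Gamma_c(\ZZ[Y]^s,p^s)$ from Remark \ref{sheaf-mod} together with the $\Gg$-action $g\cdot[y]=[g\cdot y]$ on the sheaf $\ZZ[Y]^s$, one obtains the same section. The computation is a direct unwinding of definitions.

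The main obstacle is surjectivity, where I need to reconstruct an element of $\ZZ[Y]$ from an arbitrary section $\xi\in \Gamma_c(\ZZ[Y]^s,p^s)$. The argument I would give is local-to-global. Fix $x\in \go$ and write the finite expansion $\xi(x)=\sum_{i=1}^k n_i[y_i]$ in $\ZZ[Y_x]$ with distinct $y_i\in Y_x$. Choose disjoint compact open neighborhoods $V_i\ni y_i$ on which $p$ is a homeomorphism; by continuity of $\xi$ together with continuity of addition in the sheaf $\ZZ[Y]^s$ and the explicit basic opens recalled above, there is a compact open neighborhood $W_x\subseteq \bigcap_i p(V_i)$ of $x$ on which $\xi(x')=\sum_i n_i[p|_{V_i}^{-1}(x')]$. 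Then $\xi|_{W_x}=\Phi(m_x)|_{W_x}$ where $m_x=\sum_i n_i\chi_{p|_{V_i}^{-1}(W_x)}$. Covering the compact support of $\xi$ by finitely many such $W_x$ and refining to a disjoint compact open partition (using total disconnectedness of $\go$), I patch the local $m_x$'s into a single $m\in\ZZ[Y]$ with $\Phi(m)=\xi$. The delicate point is ensuring the patching is consistent on overlaps, which is handled automatically once the partition is made disjoint, since then the formula $\Phi(m)(x)=\sum_{p(y)=x}m(y)[y]$ is determined fiber by fiber.
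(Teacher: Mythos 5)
Your proposal is correct, and it follows the same basic strategy as the paper (identify a section of $\ZZ[Y]^s$ with its fiberwise coefficient function), but it organizes the bijectivity differently. The paper simply writes down the inverse map $\Phi^{-1}(\xi)(y)=a_y$, where $\xi(x)=\sum_{p(y)=x}a_y[y]$, and asserts that the two composites are the identity; it does not spell out why this coefficient function is locally constant with compact support, i.e.\ why it lands in $C_c(Y,\ZZ)$. Your injectivity-plus-surjectivity argument supplies exactly that detail: the local-to-global step — expressing $\xi$ near each point as a finite $\ZZ$-combination of sections $x'\mapsto[p|_{V_i}^{-1}(x')]$, covering the compact support, and refining to a disjoint compact open partition of $\go$ — is what shows the preimage is a genuine element of $\ZZ[Y]$, so your write-up is in this respect more complete (you could streamline it by noting that $p^s$ is a local homeomorphism, so two continuous sections agreeing at a point agree on a neighborhood). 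For equivariance, the paper computes directly with an arbitrary $f\in\ZZ[\Gg]$ and $m\in\ZZ[Y]$ via the relabeling $h=g^{-1}$, whereas you reduce by bilinearity to generators $\chi_U\cdot\chi_V$ and invoke Lemma \ref{lem:section_xV}; both are fine, though your "direct unwinding of definitions" on the sheaf side is the one place you should actually display the two sections being compared, since that is the substance of the verification. Your well-definedness step (decomposing $m$ into indicators of slices on which $p$ is injective) is also a point the paper leaves implicit.
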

\begin{proof}
  To see that $\Phi$ is a bijection, we will define its inverse. Let
  $\xi\in \Gamma_c(\ZZ[Y]^s,p^s)$. By definition, if $x\in \Gg^{(0)}$, there exist
  finitely many non-zero $a_{y}\in \ZZ$ with $y\in Y_x$
 such that
  \[
    \xi(x)=\sum_{p(y)=x}a_{y}[y].
  \]
  Then we take $\Phi^{-1}(\xi)(y)=a_{y}$. It is easy to see that
  $\Phi^{-1}\circ \Phi$ and $\Phi\circ \Phi^{-1}$ are the
  identity maps. 

  We check next that $\Phi$ is a module morphism. Let $f\in\ZZ[\Gg]$ and
  $m\in \ZZ[Y]$. Then
  \begin{multline*}
\Phi(f\cdot m)(x)=\sum_{p(y)=x}(f\cdot
   m)(y)[y]
    =\sum_{p(y)=x}\sum_{s(g)=x}f(g^{-1})m(gy)[g^{-1}gy]\\
 =\sum_{r(h)=x}f(h) h\cdot\left(\sum_{p(y)=s(h)}m(y)[y]\right)=f\cdot \Phi(m)(x)    
\end{multline*}
for all $x\in \Gg^{(0)}$,  where the last equality follows from the
previous line by relabeling $g^{-1}$ with $h$ and  $gy$ with $y$.

\end{proof}

\begin{rmk}\label{rm:Phi_n}
  If $Y=\Gg^{(n)}$, we write $\Phi_n$ for the map provided by the lemma:
  $\Phi_n:\ZZ[\Gg^{(n)}]\to \Gamma_c(\ZZ[\Gg^{(n)}]^s,p^s)$ defined via
  \[
    \Phi_n(m)(x)=\sum_{p(g_1,\dots,g_n)=x}m(g_1,\dots,g_n)[g_1,\dots,g_n],
  \]
  where $[g_1,\dots, g_n]$ is the generator determined by $(g_1,\dots, g_n)$ in the free abelian group $\ZZ[\Gg_x^{(n)}]$.
\end{rmk}
 
\begin{rmk}
  If $Y$ is an \'etale $\Gg$-space, $y\in Y$ and $V$ is a compact open
  subset of $Y$, we let $\langle y \rangle_V^s$ be the image under the
  map $\Phi$ of the section $\langle  y \rangle_V$ defined in
  \eqref{eq:x_V}. In particular, if $p|_V$ is a homeomorphism onto
  $p(V)$ it follows that
  \begin{equation}
    \label{eq:y_V^s}
    \langle y  \rangle_V^s(x)=
    \begin{cases}
      [z] & \text{ if }z\in V\text{ and }p(z)=x\in p(V)\\
      0 & \text{ otherwise.}
    \end{cases}
  \end{equation}
  Hence $\langle y \rangle_V^s(y)=[y]$.
\end{rmk}


The above lemma allows us to prove that our definition of cohomology
recovers the sheaf cohomology as defined \cite[Chapter III]{H76} and
\cite[Chapter I]{R80}. We follow the notation of   \cite[\S 2
and \S3]{GK}. 
We recall the definition of continuous cocycle sheaf cohomology:

\begin{dfn}
Let $\Gg$ be an \'etale groupoid and let $\Aa$ be a $\Gg$-sheaf. The set of continuous $n$-cochains with values in $\Aa$ is
\[C^n(\Gg,\Aa)=\{f:\Gg^{(n)}\to \Aa\;\mid\;f\;\text{continuous,}\; f(g_1,...,g_n)\in \Aa_{r(g_1)}\}.\]
The differentials (or boundary maps) are defined for $n\ge 1$ by
\[\delta_c^n:C^n(\Gg, \Aa)\to C^{n+1}(\Gg,
  \Aa),\;(\delta_c^nf)(g_0,g_1,...,g_n)=\]
\[=g_0\cdot f(g_1,...,g_n)+\sum_{i=1}^n(-1)^if(g_0,...,g_{i-1}g_i,...,g_n)+(-1)^{n+1}f(g_0,...,g_{n-1}),\]
and for $n=0$ by $(\delta_c^0f)(g_0)=g_0f(s(g_0))-f(r(g_0))$.
The continuous cocycle sheaf cohomology is defined as $H_c^n(\Gg, \Aa)=(\ker \delta_c^n)/( \text{im } \delta_c^{n-1})$ with $\delta_c^{-1}=0$.
\end{dfn}

\begin{prop}\label{prop:sheaf_coh}
  For $n\ge 1$ let $\od_n:\ZZ[\Gg^{(n+1)}]^s\to\ZZ[\Gg^{(n)}]^s$ be
  defined via
  \[
    \od_n([h_0,\dots,h_n])=\sum_{i=0}^{n-1}(-1)^i[h_0,\dots,h_ih_{i+1},\dots ,
    h_n]+ (-1)^n[h_0,\dots,h_{n-1}].
  \]
  For $n=0$ we take $\od_0:\ZZ[\Gg^{(1)}]^s\to \ZZ[\Gg^{(0)}]^s, \;\od_0([h_0])=[s(h_0)]$. Define $\ob_n:\Gamma_c(\ZZ[\Gg^{(n+1)}]^s,p^s)\to
  \Gamma_c(\ZZ[\Gg^{(n)}]^s,p^s)$ via
  $\ob_n(\xi)(x)=\od_n(\xi(x))$. Then
  \begin{equation}\label{eq:intertwine_bn}
    \Phi_n\circ b_n=\ob_n\circ \Phi_{n+1},
 \end{equation}
     where $b_n$ were defined in \eqref{bardiff}.
\end{prop}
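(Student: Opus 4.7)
The plan is to verify the identity pointwise on sections: for each $m \in \ZZ[\Gg^{(n+1)}]$ and each $x \in \go$, I will show that $\Phi_n(b_n(m))(x) = \ob_n(\Phi_{n+1}(m))(x)$ as elements of $\ZZ[\Gg^{(n)}_x]$. Everything in sight is $\ZZ$-linear in $m$, so no choice of basis is needed.

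The crucial preliminary observation is that for $n \ge 1$ each face map $b^n_i:\Gg^{(n+1)} \to \Gg^{(n)}$ is anchor-preserving: the leading range $r(h_0)$ is unchanged whether $b^n_i$ merges two adjacent components or drops the final one, so $p\circ b^n_i = p$. Consequently, for any $y$ with $p(y)=x$, the preimage of $y$ under $b^n_i$ lies entirely inside $p^{-1}(x)\subseteq \Gg^{(n+1)}$, which makes the pushforward $b^n_{i*}$ amenable to a stalk-wise analysis.

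I would then expand the LHS at $x$ using the definitions of $\Phi_n$ and $b_n = \sum_i (-1)^i b^n_{i*}$, swap the order of summation, and change variables $y = b^n_i(h)$ in each inner sum:
\[
\Phi_n(b_n(m))(x) = \sum_{p(y)=x}\sum_{i=0}^n (-1)^i (b^n_{i*}m)(y)\,[y] = \sum_{p(h)=x} m(h)\sum_{i=0}^n(-1)^i [b^n_i(h)] = \sum_{p(h)=x} m(h)\,\od_n([h]).
\]
For the RHS, $\ZZ$-linearity of $\od_n$ on the fiber $\ZZ[\Gg^{(n+1)}_x]$ gives
\[
\ob_n(\Phi_{n+1}(m))(x) = \od_n\!\Bigl(\sum_{p(h)=x} m(h)\,[h]\Bigr) = \sum_{p(h)=x} m(h)\,\od_n([h]),
\]
which matches and establishes \eqref{eq:intertwine_bn}.

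The case $n=0$ is handled by an analogous computation, bearing in mind that $\od_0([h_0]) = [s(h_0)]$ is not stalk-preserving, so $\ob_0$ on sections has to be read as the source-pushforward that collects the contributions $m(h)[x]$ over those $h$ with $s(h)=x$; the resulting expression coincides with $\Phi_0(s_*m)(x) = (s_*m)(x)[x]$. The main obstacle is purely bookkeeping --- reindexing the fiber sums under the change of variables $y = b^n_i(h)$ and keeping straight the mild notational subtlety at $n=0$ --- with no deeper structural input required.
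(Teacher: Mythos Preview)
Your argument is correct and follows essentially the same route as the paper: both verify the identity pointwise at $x\in\go$ by expanding $\Phi_n(b_n(m))(x)$ as a double sum and reindexing to match $\od_n$ applied to $\Phi_{n+1}(m)(x)$. Your explicit observation that $p\circ b^n_i = p$ makes the change of variables $y = b^n_i(h)$ transparent, whereas the paper carries out the same reindexing by explicitly parametrizing the fibers of each $b^n_i$ and then relabeling; your packaging is a bit cleaner but the substance is identical. Your treatment of $n=0$ (noting that $\od_0$ is not stalk-preserving and reading $\ob_0$ as a source-pushforward) is the right fix and matches what the paper glosses over with ``one can check separately''.
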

\begin{proof} Let $n\ge 1$.
  If $\;0\le i<n$ and $(g_0,\dots,g_{n-1})\in \Gg^{(n)}$, then
$b_i^n(h_0,\dots,h_n)=(g_0,\dots,g_{n-1})$ implies that $h_j=g_j$ for
  all $j<i$, $h_ih_{i+1}=g_i$, and $h_j=g_{j-1}$ for all $j>i+1$. If
  $i=n$, then $b_i^n(h_0,\dots,h_n)=(g_0,\dots,g_{n-1})$ implies that
  $h_j=g_j$ for all $ j<n$.

  Let $m\in \ZZ[\Gg^{(n+1)}]$ and let $x\in \go$. Then
  \begin{multline*}
    \Phi_n\circ
    b_n(m)(x)=\sum_{p(g_0,\dots,g_{n-1})=x}b_n(m)(g_0,\dots,g_{n-1})[g_0,\dots,g_{n-1}]\\
    =\sum_{p(g_0,\dots,g_{n-1})=x}\sum_{i=0}^n(-1)^i
    b_{i*}^n(m)(g_0,\dots,g_{n-1})[g_0,\dots,g_{n-1}]\\
    =\sum_{p(g_0,\dots,g_{n-1})=x}\left( \sum_{i=0}^{n-1}(-1)^i
      \sum_{r(h_i)=r(g_i)}m(g_0,\dots,h_i,h_{i}^{-1}g_i,\dots,g_{n-1})[g_0,\dots,g_{n-1}]\right.\\
  \left.+(-1)^n\sum_{r(h_n)=s(g_{n-1})}m(g_0,\dots,g_{n-1},h_n)[g_0,\dots,g_{n-1}]
  \right)\\
  =\sum_{i=0}^{n-1}(-1)^i\sum_{p(g_0,\dots,g_{n-1})=x}\sum_{r(h_i)=r(g_i)}m(g_0,\dots,h_i,h_{i}^{-1}g_i,\dots,g_{n-1})[g_0,\dots,g_{n-1}]\\
  +(-1)^n\sum_{p(g_0,\dots,g_{n-1})=x}\sum_{r(h_n)=s(g_{n-1})}m(g_0,\dots,g_{n-1},h_n)[g_0,\dots,g_{n-1}].
\end{multline*}
Relabel $g_j$  as $h_j$ for $j<i$. If $i<n$ we label
$h_i^{-1}g_i$ as $h_{i+1}$ and note that $g_i=h_ih_{i+1}$.  Relabel
$g_j$ as $h_{j+1}$ for $j>i+1$. Then the above sums equal
\begin{multline*}
 =\sum_{i=0}^{n-1}(-1)^i \sum_{p(h_0,\dots,h_n)=x}m(h_0,\dots
 h_i,h_{i+1},\dots,h_n)[h_0,\dots,h_ih_{i+1},\dots,h_n]\\
 +(-1)^n\sum_{p(h_0,\dots,h_n)=x}m(h_0,\dots
 h_i,h_{i+1},\dots,h_n)[h_0,\dots,h_{n-1}]\\
 =\sum_{p(h_0,\dots,h_n)=x}m(h_0,\dots
 h_i,h_{i+1},\dots,h_n)\left( \sum_{i=0}^{n-1}(-1)^i
   [h_0,\dots,h_ih_{i+1},\dots,h_n]\right. \\
 +(-1)^n[h_0,\dots,h_{n-1}]\Biggr)=\od_n(\Phi_{n+1}(m)(x))=\ob_n\circ \Phi_{n+1}(m)(x).
\end{multline*}
 One can check separately that $\Phi_0\circ b_0=\ob_0\circ \Phi_1$.
\end{proof}
For $(g_1,\dots,g_n)\in \Gg^{(n)}$ and $V$ a compact open subset of
$\Gg^{(n)}$ with $(g_1,...,g_n)\in V$ such that $p\!\mid_V:V\to p(V)$ is a homeomorphism, we write
$\la g_1,\dots,g_n\ra_V$ for the element in $\ZZ[\Gg^{(n)}]$ defined
 in \eqref{eq:x_V} in a more general setting:
\begin{equation}
  \label{eq:section_U_2}
  \la g_1,\dots,g_n\ra_V(h_1,\dots, h_n)=
  \begin{cases}
    1 &\text{ if } (h_1,\dots,h_n)\in V\\
    0 & \text{ otherwise}.
  \end{cases}
\end{equation}
In particular, $\la g_1,\dots, g_n\ra_V(g_1,\dots,g_n)=1$.
Using \eqref{eq:y_V^s}, the corresponding  section $\langle
g_1,\dots,g_n \rangle^s_V$ of $\ZZ[\Gg^{(n)}]^s$   is given by
\begin{equation}
  \label{eq:section_U_s}
  \langle g_1,\dots,g_n \rangle^s_V(x)=
  \begin{cases}
    [h_1,\dots,h_n] & \text{ if }(h_1,\dots,h_n)\in V \text{ and
                      }r(h_1)=x\in p(V)\\
    0 & \text{otherwise}.
  \end{cases}
\end{equation}
 Hence $\langle g_1,\dots,g_n \rangle^s_V(r(g_1))=[g_1,\dots,g_n]$.

\medskip



 Let $\Gg$ be an ample groupoid, and let $M$ be a $\Gg$-module. To
 define  $n$-cochains with values in $M$, we use the equivalence of
 $\Gg$-modules and $\Gg$-sheaves, and we define the $n$-cochains to
 take values in the associated sheaf $\Mm$. Recall  that we can identify $M$ with
$\Gamma_c(\Mm,\pi)$, where $\Mm$ is the $\Gg$-sheaf constructed in Remark 
\ref{sheaf-mod}.  The set of $n$-cochains $C^n(\Gg,M)$ with
values in $M\cong 
\Gamma_c(\Mm,\pi)$  is 
\[C^n(\Gg,M)=\{f:\Gg^{(n)}\to
\Mm\;\mid\; f\,\text{ continuous, } f(g_1,\dots,g_n)\in M_{r(g_1)}\}.\] 
Then $C^n(\Gg,M)$ becomes an abelian group with pointwise
addition.  Note that $C^n(\Gg, M)$ can be identified with
$\Gamma(p^*\Mm,\pi_n)$, where $p^*\Mm$ is the pullback sheaf on
$\Gg^{(n)}$ with projection $\pi_n$.

The differentials are defined for $n\ge 1$ by
\[\delta_c^n:C^n(\Gg, M)\to C^{n+1}(\Gg,
  M),\;(\delta_c^nf)(g_0,g_1,...,g_n)=\]\[=g_0\cdot
  f(g_1,...,g_n)+\sum_{i=1}^n(-1)^if(g_0,...,g_{i-1}g_i,...,g_n)+(-1)^{n+1}f(g_0,...,g_{n-1}).\]

For $n=0$, let $(\delta_c^0f)(g_0)=g_0\cdot  f(s(g_0))-f(r(g_0))$

\begin{dfn}
The $M$-valued  cocycle cohomology is defined as \[H_c^n(\Gg, M)=(\ker \delta_c^n)/( \text{im } \delta_c^{n-1}),\] where $\delta^n_c$ are as above and $\delta_c^{-1}=0$. 
\end{dfn}
The particular case $M=\Gamma_c(\Mm, \pi)$ with $\Mm=\Gg^{(0)}\times A$ where $A$ is a topological abelian group and $g\cdot(s(g),a)=(r(g),a)$ gives $H_c^n(\Gg, A)$, the  cocycle cohomology with constant coefficients.

In the next theorem, it is important to consider the $\Gg$-module
$\ZZ[\Gg^{(n)}]$ in conjunction with  the corresponding $\Gg$-sheaf
$\ZZ[\Gg^{(n)}]^s$, see Remark \ref{sheaf-mod}.   First we recall that a morphism of
$\Gg$-sheaves $\Aa$ and $\Bb$  is a continuous map $f:\Aa\to \Bb$ such that
\begin{itemize}
\item for all $x\in \Gg^{(0)}$ and $a\in \Aa_x$ we have $f(a)\in \Bb_x$ and the induced map $\Aa_x\to \Bb_x$ is a homomorphism;
\item for any $(g,a)\in \Gg\ast\Aa, f(\alpha_g(a))=\beta_g(f(a))$, where $\beta$ is the action of $\Gg$ on $\Bb$.
\end{itemize} (see Definition 3.4 from
\cite{GK}).
We
will also use the following result, which is a particular case of
\cite[Proposition 3.3]{S14}; we prove
it here for completeness. 
\begin{prop}\label{lem:hom_isom}
  Assume that $Y$ is an \'etale $\Gg$-space with map $p:Y\to \go$. 
  There is an isomorphism \[\Xi:\text{Hom}_{\Gg}(\Gamma_c(\ZZ[Y]^s,p^s),
  \Gamma_c(\Mm,\pi))\to \text{Hom}_\Gg(\ZZ[Y]^s,
  \Mm)\] defined via
  \begin{equation}\label{eq:Xi}
    \Xi(\varphi)([y])=\varphi(\langle y  \rangle^s_V)(p(y)),
  \end{equation}
  for $\varphi\in \text{Hom}_{\Gg}(\Gamma_c(\ZZ[Y]^s,p^s),
  \Gamma_c(\Mm,\pi))$ and  $[y]\in \ZZ[Y]^s$, where $V$ is a
  compact open neighborhood of $y$ such that $p|_V$ 
  is a homeomorphism onto its image, and
  the section  $\langle y \rangle^s_V$ was defined in
  \eqref{eq:y_V^s}. Its  inverse is defined via
   \[
     \Xi^{-1}(f)(\xi)(x)=f(\xi(x))
   \]
   for all $f\in \text{Hom}_\Gg(\ZZ[Y]^s, \Mm)$, $\xi\in
   \Gamma_c(\ZZ[Y]^s,p^s)$ and $x\in \go$.
In particular,
\[\text{Hom}_{\Gg}(\ZZ[Y], M)\cong
  \text{Hom}_\Gg(\ZZ[Y]^s, \Mm)\]
and the map $\Xi$ is natural with respect to morphisms of \'etale $\Gg$-spaces.
\end{prop}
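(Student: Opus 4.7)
The plan is to verify directly that formula \eqref{eq:Xi} defines a $\Gg$-sheaf morphism $\Xi(\varphi)$, construct the inverse, show that the two constructions are mutually inverse, and record naturality. Throughout I use Lemma \ref{lem:isom_gamma_n}: since $\Phi$ is a $\Gg$-module isomorphism, the identity $\chi_U\cdot\la y\ra_V=\la g\cdot y\ra_{UV}$ of Lemma \ref{lem:section_xV} transfers to sections as $\chi_U\cdot\la y\ra^s_V=\la g\cdot y\ra^s_{UV}$.

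I first check that $\Xi(\varphi)([y])$ does not depend on the choice of $V$. Given two admissible neighborhoods $V,V'$ of $y$, replacing them by their intersection I may assume $V'\subseteq V$; taking $U=p(V')\subseteq\go$ in Lemma \ref{lem:section_xV} gives $\chi_{p(V')}\cdot\la y\ra^s_V=\la y\ra^s_{V'}$. Applying $\varphi$ and evaluating at the unit $p(y)\in p(V')$ kills the indicator and yields $\varphi(\la y\ra^s_{V'})(p(y))=\varphi(\la y\ra^s_V)(p(y))$. I then extend $\Xi(\varphi)$ to all of $\ZZ[Y]^s$ by $\ZZ$-linearity on each stalk; it is stalk-preserving by construction. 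For continuity, on the basic open set $\{[y']:y'\in V\}\subseteq\ZZ[Y]^s$ the map $\Xi(\varphi)$ agrees with the composition $\varphi(\la y\ra^s_V)\circ p\!\mid_V$, because $\la y'\ra^s_V=\la y\ra^s_V$ as sections for every $y'\in V$, and this composition is continuous. For $\Gg$-equivariance, if $s(g)=p(y)$ and $U\in\Bis(\Gg)$ contains $g$, then
\[\Xi(\varphi)([g\cdot y])=\varphi(\la g\cdot y\ra^s_{UV})(r(g))=\bigl(\chi_U\cdot\varphi(\la y\ra^s_V)\bigr)(r(g));\]
since $U$ is a bisection, the module action at $r(g)$ reduces to $g\cdot\varphi(\la y\ra^s_V)(s(g))=g\cdot\Xi(\varphi)([y])$.

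For the inverse, I set $\Xi^{-1}(f)(\xi)(x)=f(\xi(x))$ for any $\Gg$-sheaf morphism $f:\ZZ[Y]^s\to\Mm$. Continuity and compact support of $\Xi^{-1}(f)(\xi)$ come from those of $\xi$ and $f$, and the $\Gg$-module identity for $\Xi^{-1}(f)$ reduces to the $\Gg$-equivariance of $f$ via the observation that $(\chi_U\cdot\eta)(x)=h\cdot\eta(s(h))$ for the unique $h\in U$ with $r(h)=x$. The composition $\Xi\circ\Xi^{-1}=\text{id}$ is immediate from the formulas. For $\Xi^{-1}\circ\Xi=\text{id}$, I write any $\xi$ as a finite sum $\xi=\sum_j n_j\la y_j\ra^s_{V_j}$---possible because $\ZZ[Y]$ is spanned by indicators of compact open sets on which $p$ restricts to a homeomorphism---and then linearity of both $\varphi$ and stalkwise evaluation yields $\varphi(\xi)(x)=\sum_j n_j\varphi(\la y_j\ra^s_{V_j})(x)=\Xi(\varphi)(\xi(x))$. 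Naturality with respect to a morphism $\psi:Y\to Y'$ of \'etale $\Gg$-spaces is a direct diagram chase: on a small neighborhood $V$ of $y$ where $\psi|_V$ is a homeomorphism, the pushforward sends $\la y\ra^s_V$ to $\la\psi(y)\ra^s_{\psi(V)}$, and \eqref{eq:Xi} transforms accordingly.

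The main obstacle is the independence of $\Xi(\varphi)([y])$ from $V$, together with the continuity of $\Xi(\varphi)$. Both rely on transferring information from the module action in Lemma \ref{lem:section_xV} into properties of the underlying continuous sections, and on exploiting the $\Gg$-module structure of $\Gamma_c(\Mm,\pi)$ to localize $\varphi(\la y\ra^s_V)$ near $p(y)$. Once these are in hand, the remaining verifications are routine.
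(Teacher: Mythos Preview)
Your proof is correct and follows essentially the same route as the paper: independence of $V$ via the $\ZZ[\Gg]$-module action with an indicator $\chi_U$, equivariance via Lemma~\ref{lem:section_xV}, and mutual inverses checked on the spanning sections $\la y\ra^s_V$. You actually go slightly further than the paper by explicitly addressing continuity of $\Xi(\varphi)$ on the basic open sets and by sketching the naturality verification, both of which the paper leaves to the reader.
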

\begin{proof}
  We use Remark \ref{sheaf-mod} to identify $M$ with $\Gamma_c(\Mm,\pi)$ and Lemma \ref{lem:isom_gamma_n} to
  identify $\ZZ[Y]$ with $\Gamma_c(\ZZ[Y]^s,p^s)$. Hence we can identify  $\text{Hom}_{\Gg}(\ZZ[Y], M)$ with
  $\text{Hom}_{\Gg}(\Gamma_c(\ZZ[Y]^s,p^s),
  \Gamma_c(\Mm,\pi))$.

  We note that the definition of $\Xi$ is independent of
  the choice of the compact open neighborhood $V$. Indeed, assume that
  $W$ is another compact open neighborhood of $y$ such
  that $p|_W$ is a homeomorphism. Let $U:=p(V\bigcap W)$. Then $U$
  is a compact open subset of $\go$ and
  \begin{multline*}
    \varphi(\langle y  \rangle^s_V)(p(y))=(\chi_U\cdot
    \varphi(\langle y \rangle^s_V))(p(y))
    =\varphi(\chi_U\cdot \langle y \rangle^s_V)(p(y))=\varphi(\langle y \rangle^s_{UV})(p(y)) \\
    =\varphi(\langle y \rangle^s_{UW})(p(y))=\varphi(\chi_U\cdot \langle  y
    \rangle^s_W)(p(y))
    =(\chi_U\cdot \varphi(\langle y \rangle^s_W))(p(y))\\
    =\varphi(\langle  y \rangle^s_W)(p(y)).
  \end{multline*}
   By definition,  $\Xi(\varphi)([y])\in M_{p(y)}$. Hence the first
  condition of a sheaf homomorphism is satisfied. To check the second
  condition, let $g\in \Gg$ and $y\in Y$. Let $V$ be
  a compact open neighborhood of $y$ such that $p|_V$
  is a homeomorphism onto its image, and
  let $U\in \Bis(\Gg)$ such that $g\in U$. Then
  \begin{multline*}
     \Xi(\varphi)(g\cdot [y])= \Xi(\varphi)([g\cdot
     y])=\varphi(\langle  gy
     \rangle^s_{UV})(r(g))
     =\varphi(\chi_U\cdot\langle y \rangle^s_V)(r(g))\\=(\chi_U\cdot
     \varphi(\langle y \rangle^s_V))(r(g))
     =g\cdot (\varphi(\langle y \rangle^s_V)(p(y))).
   \end{multline*}
   We check that
   $\Xi^{-1}(f)$ is a $\ZZ[\Gg]$-homomorphism, for all $f\in
   \text{Hom}_\Gg(\ZZ[Y]^s, \Mm)$. Let $a\in \ZZ[\Gg]$ and
   $\xi\in \Gamma_c(\ZZ[Y]^s,p^s)$. We have
   \begin{multline*}
     \Xi^{-1}(f)(a\cdot \xi)(x)=f((a\cdot \xi)(x))=f\left(
       \sum_{r(g)=x}a(g)g\cdot \xi(s(g)) \right)\\
     =\sum_{r(g)=x}a(g)g\cdot f(\xi(s(g)))=(a\cdot
     \Xi^{-1}(f)(\xi))(x).  
   \end{multline*}
   We prove next that $\Xi\circ \Xi^{-1}(f)=f$ and
   $\Xi^{-1}\circ \Xi(\varphi)=\varphi$.

    Under our assumption
  that $\Gg$ and  $Y$ are Hausdorff, it suffices
  to prove that $\Xi^{-1}\circ \Xi(\varphi)(\xi)=\varphi(\xi)$ for
  $\xi=\langle  y \rangle^s_V$ for all
  $[y]\in \ZZ[Y]^s$ and $V$ any compact open
  neighborhood of $y$ such that $p|_V$ is a
  homeomorphism onto its image.  We have
  \begin{multline*}
    \bigl(  \Xi^{-1}\circ \Xi(\varphi)\bigr)(\langle y
    \rangle^s_V)(x)=\Xi(\varphi)(\langle y \rangle^s_V(x))\\
    =
    \begin{cases}
      \Xi(\varphi)([z]) & \text{ if }z\in V\,\text{
                                        and }p(z)=x\in p(V)\\
      0 & \text{otherwise }
    \end{cases}\\
    =\begin{cases}
      \varphi(\langle z \rangle^s_V)(x) & \text{ if }z\in V\,\text{
                                                  and
                                                      }p(z)=x\in p(V)\\
      0 & \text{otherwise }
     \end{cases}\\
  =\varphi(\langle  y \rangle^s_V)(x),
   \end{multline*}
   where we used the fact that for a fixed $V$, $\langle z
   \rangle^s_V=\langle y \rangle^s_V$
   for all $z$ and $y$ in $V$.
   
   Let $f\in \text{Hom}_\Gg(\ZZ[Y]^s, \Mm)$ and
   $[y]\in \ZZ[Y]^s$. Then
   \begin{multline*}
     \bigl( \Xi\circ \Xi^{-1}(f) \bigr)([y])=\bigl(
     \Xi^{-1}f\bigr)(\langle y \rangle^s_V)(p(y))
     =f(\langle  y \rangle^s_V(p(y)))=f([y]),
   \end{multline*}
   where $V$ is a compact open neighborhood of $y$ such
   that $p|_V$ is a homeomorphism onto its image. 
\end{proof}

\begin{rmk}\label{rem:Xi_n} If $Y=\Gg^{(n)}$, we write $\Xi_n$ for the corresponding isomorphism
  \[\Xi_n:\text{Hom}_{\Gg}(\Gamma_c(\ZZ[\Gg^{(n)}]^s,p^s),
  \Gamma_c(\Mm,\pi))\to \text{Hom}_\Gg(\ZZ[\Gg^{(n)}]^s,
  \Mm)\] defined via
  \begin{equation}\label{eq:Xi_n}
    \Xi_n(\varphi)([g_1,\dots,g_n])=\varphi(\langle g_1,\dots,g_n  \rangle^s_V)(r(g_1)).
  \end{equation}
  Therefore \[\text{Hom}_{\Gg}(\ZZ[\Gg^{(n)}], M)\cong
  \text{Hom}_\Gg(\ZZ[\Gg^{(n)}]^s, \Mm)\]
and the map $\Xi_n$ is natural with respect to morphisms of \'etale $\Gg$-spaces.
\end{rmk}

If $\varphi\in \text{Hom}_{\Gg}(\ZZ[\Gg^{(n)}],M)$ then $\eta_M\circ
\varphi\in \text{Hom}_{\Gg}(\ZZ[\Gg^{(n)}],\Gamma_c(\Mm,\pi))$, where
$\eta_M$ is the isomorphism defined in Remark \ref{sheaf-mod}. To keep the
notation cleaner we do not write $\eta_M$ in the remaining of the
paper. That is, we identify $\text{Hom}_{\Gg}(\ZZ[\Gg^{(n)}],M)$ with
$\text{Hom}_{\Gg}(\ZZ[\Gg^{(n)}],\Gamma_c(\Mm,\pi))$ via composition
with $\eta_M$. The following result combines the equivalence between
$\Gg$-modules and $\Gg$-sheaves for ample groupoids, \cite[Theorem 3.5]{S14}, with
\cite[Proposition 3.14]{GK}.

\begin{thm}\label{thm:main}
Let $\Gg$ be an ample groupoid and let $M$ be a $\Gg$-module. For each $n\ge 0$ there is an isomorphism $\theta^n:\text{Hom}_{\Gg}(\ZZ[\Gg^{(n+1)}], M)\to C^n(\Gg, M)$ determined by
\[(\theta^n\varphi)(g_1,...,g_n)=\varphi(\la r(g_1),g_1,...,g_n\ra_V )(r(g_1)),\]
for all $\varphi\in \text{Hom}_{\Gg}(\ZZ[\Gg^{(n+1)}], M)$ and
$(g_1,\dots,g_n)\in \Gg^{(n)}$, where for
$V$  a  compact open subset of $\Gg^{(n+1)}$ such that
$(r(g_1),g_1,\dots,g_n)\in V$ and 
$p|_V$ is a homeomorphism, $\la
r(g_1),g_1,...,g_n\ra_V$ is the function defined in equation \eqref{eq:section_U_2}.

The map $\theta^n$  is compatible with the boundary maps, and  induces
an isomorphism $H^n(\Gg, M)\cong H^n_c(\Gg, M)$. The inverse is
induced by $\rho^n:C^n(\Gg, M)\to \text{Hom}_{\Gg}(\ZZ[\Gg^{(n+1)}],
M)$ determined by  
\[(\rho^nf)(\la g_0,g_1,...,g_n\ra_W)(x)=
  \begin{cases}
    h_0\cdot f(h_1,\dots,h_n) & \text{ if } (h_0,\dots, h_n)\in W\\
                             &   \text{ and } p(h_0,\dots,h_n)=x\\
    0 & \text{ otherwise.}
  \end{cases}
\] for $f\in C^n(\Gg, M)$ and $W$ a compact open subset of
$\Gg^{(n+1)}$ such that $p|_W$ is a homeomorphism and
$(g_0,\dots,g_n)\in W$.
\end{thm}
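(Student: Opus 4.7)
The plan is to factor $\theta^n$ through the equivalence between $\Gg$-modules and $\Gg$-sheaves developed in Proposition \ref{lem:hom_isom}, reducing the theorem to (a version of) \cite[Proposition 3.14]{GK} tailored to our setting. Given $\varphi\in\text{Hom}_{\Gg}(\ZZ[\Gg^{(n+1)}],M)$, applying $\Xi_{n+1}$ produces a sheaf morphism $\tilde\varphi:\ZZ[\Gg^{(n+1)}]^s\to\Mm$ with $\tilde\varphi([g_0,\dots,g_n])=\varphi(\la g_0,\dots,g_n\ra_V)(r(g_0))$, and the formula for $\theta^n$ is precisely the composition of $\Xi_{n+1}$ with the restriction map $\tilde\varphi\mapsto\bigl((g_1,\dots,g_n)\mapsto\tilde\varphi([r(g_1),g_1,\dots,g_n])\bigr)\in C^n(\Gg,M)$. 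Independence of $(\theta^n\varphi)(g_1,\dots,g_n)$ from the choice of $V$ follows from the argument already used in the proof of Proposition \ref{lem:hom_isom}, and continuity of $\theta^n\varphi$ as an element of $C^n(\Gg,M)$ follows from the continuity of $(g_1,\dots,g_n)\mapsto[r(g_1),g_1,\dots,g_n]$ as a section of $\ZZ[\Gg^{(n+1)}]^s$.

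For bijectivity, I would produce $\rho^n$ as an explicit inverse. The key observation is that in $\ZZ[\Gg^{(n+1)}]^s$ every generator satisfies $[g_0,g_1,\dots,g_n]=g_0\cdot[r(g_1),g_1,\dots,g_n]$, so by $\Gg$-equivariance $\tilde\varphi([g_0,\dots,g_n])=g_0\cdot\tilde\varphi([r(g_1),\dots,g_n])$. Consequently $\tilde\varphi$ is determined by its restriction to the based generators, and setting $(\rho^nf)([g_0,\dots,g_n]):=g_0\cdot f(g_1,\dots,g_n)$ produces the unique $\Gg$-equivariant extension; one then checks routinely that this formula extends to a continuous $\ZZ$-linear sheaf map (equivalently, via $\Xi_{n+1}^{-1}$, to a $\ZZ[\Gg]$-module map on $\ZZ[\Gg^{(n+1)}]$ with the value described in the statement) and that $\rho^n\circ\theta^n$ and $\theta^n\circ\rho^n$ are identities.

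The main technical step---and the step I expect to be the principal obstacle---is verifying $\theta^{n+1}\circ\delta_n=\delta_c^n\circ\theta^n$. By definition, $(\theta^{n+1}\delta_n\varphi)(g_0,\dots,g_n)=\varphi\bigl(b_{n+1}(\la r(g_0),g_0,\dots,g_n\ra_V)\bigr)(r(g_0))$, and expanding $b_{n+1}$ via \eqref{bardiff} yields an alternating sum that decomposes into a leading section $\la g_0,g_1,\dots,g_n\ra$, middle sections $\la r(g_0),g_0,\dots,g_{i-1}g_i,\dots,g_n\ra$ for $1\le i\le n$, and a terminal section $\la r(g_0),g_0,\dots,g_{n-1}\ra$. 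Applying $\varphi$ and evaluating at $r(g_0)$ immediately turns the middle and terminal terms into the corresponding summands of $(\delta_c^n\theta^n\varphi)(g_0,\dots,g_n)$. The leading term is the subtle one: by Lemma \ref{lem:section_xV}, $\la g_0,g_1,\dots,g_n\ra_{UV}=\chi_U\cdot\la r(g_1),g_1,\dots,g_n\ra_V$ for any bisection $U\ni g_0$, so the $\ZZ[\Gg]$-linearity of $\varphi$ combined with the formula for the $\Gg$-action on $\Gamma_c(\Mm,\pi)$ from Remark \ref{sheaf-mod} gives $\varphi(\la g_0,\dots,g_n\ra_{UV})(r(g_0))=g_0\cdot(\theta^n\varphi)(g_1,\dots,g_n)$, which is precisely the $g_0\cdot f(g_1,\dots,g_n)$ contribution to $\delta_c^n$. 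Since $\theta^n$ is then a bijection of abelian groups compatible with the differentials, it descends to the desired isomorphism $H^n(\Gg,M)\cong H_c^n(\Gg,M)$.
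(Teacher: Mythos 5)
Your proposal is correct and follows essentially the same route as the paper: factoring $\theta^n$ through the module--sheaf equivalence of Proposition \ref{lem:hom_isom} and the restriction-to-based-generators map of \cite[Proposition 3.14]{GK} (with inverse given by the equivariant extension $[g_0,\dots,g_n]\mapsto g_0\cdot f(g_1,\dots,g_n)$), then checking compatibility with the differentials by expanding $b_{n+1}$. Your justification of the leading term via Lemma \ref{lem:section_xV} and the module action on $\Gamma_c(\Mm,\pi)$ is exactly the key equality $g_0\cdot\varphi(\la r(g_1),g_1,\dots,g_n\ra_V)(r(g_1))=\varphi(\la g_0,\dots,g_n\ra_W)(r(g_0))$ that the paper's computation rests on, spelled out in slightly more detail.
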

  
\begin{proof}

We mention that the map $\theta^n$ is the composition of the map $\xi^n$ defined in
  \cite[Proposition 3.14]{GK} with the map $\Xi_n$ defined in
  \eqref{eq:Xi_n} and the map $\Phi_n$ defined in Remark \ref{rm:Phi_n}. The map $\xi^n$ is given via
  \[
    (\xi^nf)(g_1,\dots,g_n)=f([r(g_1),g_1,\dots,g_n])
  \]
  with inverse $\eta^n$ defined via
  \[
    (\eta^nf)([g_0,g_1,\dots,g_n])=g_0\cdot f(g_1,\dots,g_n).
  \]

Using  the proof of Proposition \ref{lem:hom_isom}, the
definition of $(\theta^n\varphi)(g_1,\dots,g_n)$ is independent of the
compact open set $V$. For each $\varphi\in
\text{Hom}_{\Gg}(\ZZ[\Gg^{(n+1)}], M)$, $\theta^n\varphi$ is
continuous, since $\varphi(f)$ is a continuous section for any $f\in
\ZZ[\Gg^{n+1}]$ and $r$ is a local homeomorphism.  

A routine computation shows that $\delta^n_c(\theta^n\varphi)=\theta^{n+1}(\delta_n(\varphi))$, in other words, $\theta^n$ 
takes cocycles to cocycles and coboundaries to coboundaries, so it  induces a homomorphism $H^n(\Gg, M)\to H^n_c(\Gg, M)$. Indeed, we have 
\[\delta^n_c(\theta^n\varphi)(g_0,g_1,...,g_n)
=g_0\cdot(\theta^n\varphi)(g_1,...,g_n)+\]\[+\sum_{i=1}^n(-1)^i(\theta^n\varphi)(g_0,...,g_{i-1}g_i,...,g_n)+(-1)^{n+1}(\theta^n\varphi)(g_0,...,g_{n-1})\]\[=g_0\cdot \varphi(\la r(g_1), g_1,...,g_n\ra_V)(r(g_1))+\]\[+\sum_{i=1}^n(-1)^i\varphi(\la r(g_0),g_0,...,g_{i-1}g_i,...,g_n\ra_{V_i})(r(g_0))+\]\[+(-1)^{n+1}\varphi(\la r(g_0),g_0,...,g_{n-1}\ra_{V_{n+1}})(r(g_0))\]
and
\[\theta^{n+1}(\varphi\circ b_{n+1})(g_0,g_1,...,g_n)=(\varphi\circ b_{n+1})(\la r(g_0),g_0,g_1,...,g_n\ra_U)(r(g_0))=\]\[=\varphi(\sum_{i=0}^{n+1}(-1)^ib^{n+1}_{i*}(\la r(g_0),g_0,g_1,...,g_n\ra_U))(r(g_0))=\]\[=\varphi(\la g_0,g_1,...,g_n\ra_W)(r(g_0))+\]\[+\sum_{i=1}^n(-1)^i\varphi(\la r(g_0),g_0,...,g_{i-1}g_i,...,g_n\ra_{V_i})(r(g_0))+\]\[+(-1)^{n+1}\varphi(\la r(g_0),g_0,...,g_{n-1}\ra_{V_{n+1}})(r(g_0)).\]
The equality holds since 
\[g_0\cdot \varphi(\la r(g_1),g_1,...,g_n\ra_V )(r(g_1))=\varphi(\la g_0,g_1,...,g_n\ra_W)(r(g_0)),\]

\bigskip
The fact that
$\delta^n_c(\theta^n\varphi)=\theta^{n+1}(\delta_n(\varphi))$ also follows 
from Proposition \ref{prop:sheaf_coh}, Proposition \ref{lem:hom_isom}
and \cite[Proposition 3.14]{GK} and so does the fact that $\theta^n$
is invertible with inverse $\rho^n$. 

\end{proof}

\begin{cor}
Given an ample groupoid $\Gg$, the cocycle cohomology $H^*_c(\Gg,\Mm)$ coincides with $H^*(\Gg,M)$, where $M=\Gamma_c(\Mm, \pi)$. Using section 8 in \cite{T06}, it follows that equivalent groupoids have the same cohomology. If the sheaf $\Mm$ is the trivial sheaf $\underline{\ZZ}=\Gg^{(0)}\times \ZZ$, then we write $H^n(\Gg,\ZZ)$ for the cohomology groups with constant coefficients $\ZZ$. 
\end{cor}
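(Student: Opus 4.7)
The plan is to deduce all three claims almost directly from Theorem \ref{thm:main} together with identifications already assembled in this section. First, I would observe that the continuous cocycle sheaf cohomology $H^n_c(\Gg, \Mm)$, as introduced for a $\Gg$-sheaf $\Mm$, coincides on the nose with the $M$-valued cocycle cohomology $H^n_c(\Gg, M)$ when we set $M = \Gamma_c(\Mm, \pi)$. Indeed, both groups are computed from the very same abelian group of continuous maps $f:\Gg^{(n)}\to \Mm$ with $f(g_1,\dots,g_n)\in \Mm_{r(g_1)}$, and the coboundary formulas agree, because the $\Gg$-module structure on $\Gamma_c(\Mm,\pi)$ was defined in Remark \ref{sheaf-mod} precisely so that the two actions match. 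Applying Theorem \ref{thm:main} to this module then yields $H^n(\Gg, M) \cong H^n_c(\Gg, M) = H^n_c(\Gg, \Mm)$, which is the first claim.

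For the equivalence invariance, I would appeal to \cite[\S 8]{T06}, where Tu realizes Haefliger's sheaf cohomology of \'etale groupoids as an instance of sheaf cohomology of topological simplicial spaces and proves in that framework that the cohomology groups depend only on the equivalence class of the groupoid, with the coefficient sheaf transported along the equivalence. Combined with the first part of the corollary, this shows that if $\Gg_1$ and $\Gg_2$ are equivalent ample groupoids, $\Mm$ is a $\Gg_1$-sheaf with corresponding module $M = \Gamma_c(\Mm,\pi)$, and $\Mm'$ is the transported $\Gg_2$-sheaf with corresponding module $M'$, then $H^n(\Gg_1, M) \cong H^n_c(\Gg_1,\Mm) \cong H^n_c(\Gg_2,\Mm') \cong H^n(\Gg_2, M')$.

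The final assertion is a pure notational convention: for the constant sheaf $\underline{\ZZ} = \Gg^{(0)}\times \ZZ$ equipped with the trivial action $g\cdot(s(g),a) = (r(g),a)$, one simply sets $H^n(\Gg, \ZZ) := H^n(\Gg, \Gamma_c(\underline{\ZZ}, \pi))$, which by the first part agrees with the cocycle cohomology with constant $\ZZ$ coefficients. No substantive obstacle arises in the whole argument; the only bookkeeping point is to verify that the identifications between $\Gg$-sheaves and $\Gg$-modules, and between the two versions of cocycle cohomology, carry the coboundary maps through consistently. This is already handled by the explicit formulas in Proposition \ref{lem:hom_isom} and the intertwining computation in Theorem \ref{thm:main}, so everything reduces to a clean application of what has already been established.
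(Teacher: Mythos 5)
Your proposal is correct and follows essentially the same route the paper intends: the corollary is an immediate consequence of Theorem \ref{thm:main} once one notes that, for $M=\Gamma_c(\Mm,\pi)$, the cochain complex defining $H^n_c(\Gg,M)$ is identified (via Steinberg's equivalence from Remark \ref{sheaf-mod}, under which the germ sheaf of $\Gamma_c(\Mm,\pi)$ is naturally isomorphic to $\Mm$) with the complex defining $H^n_c(\Gg,\Mm)$, and Morita invariance is then imported from \cite[\S 8]{T06} exactly as you describe.
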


As another consequence of Steinberg's equivalence theorem
(\cite[Theorem 3.5]{S14}) as applied
in Proposition \ref{lem:hom_isom}, we can describe the dependence of $H^*$
on $\Gg$. We sketch the details next.

Recall (see, for example, \cite[\S 0]{K88}), that if $\phi:\Gg_1\to
\Gg_2$ is an \'etale groupoid homomorphism then one can define the
pullback functor $\phi^*$ from the category $\Ss(\Gg_2)$ of 
$\Gg_2$-sheaves and $\Gg_2$-morphisms of sheaves, to the category
$\Ss(\Gg_1)$ as follows. If  $\Aa$ is a
$\Gg_2$-sheaf, then the pullback $\Gg_1$-sheaf is
\[
  \phi^*\Aa=\{ (x,a)\,:\,x\in \Gg_1^{(0)}, a\in \Aa_{\phi(x)}\}.
\]
The action of $\Gg_1$ is defined via $g\cdot
(s(g),a):=(r(g),\phi(g)\cdot a)$.
If $f:\Aa\to \Bb$ is a morphism of $\Gg_2$-sheaves, then
the pullback morphism $\phi^*(f):\phi^*\Aa\to \phi^*\Bb$ is
defined via $\phi^*(f)(x,a)=(x,f(a))$. 
We define $\psi_n:\ZZ[\Gg_1^{(n)}]^s\to
\phi^*(\ZZ[\Gg_2^{(n)}]^s)$ via 
\[
  \psi_n(\sum a_{(g_1,\dots,g_n)}[g_1,\dots,g_n])=(x,\sum a_{(g_1,\dots,g_n)}[\phi^{(n)}(g_1,\dots,g_n)]),
\]
where $x=p^s(\sum a_{(g_1,\dots,g_n)}[g_1,\dots,g_n])$.
Hence, if
$f\in \text{Hom}_{\Gg_2}(\ZZ[\Gg_2^{(n)}]^s,\Mm)$, where $\Mm$ is a
$\Gg_2$-sheaf, then $\phi^*(f)\circ \psi_n\in
\text{Hom}_{\Gg_1}(\ZZ[\Gg_1^{(n)}]^s,\phi^*\Mm)$. We write 
$\phi^*(f)$ instead of $\phi^*(f)\circ \psi_n$ in the remainder of the
paper to slightly simplify  the notation.
 
\begin{cor}\label{cor:pullback} Consider an \'etale groupoid
homomorphism $\phi:\Gg_1\to \Gg_2$ between ample 
groupoids. If $M$ is a $\Gg_2$-module, then we identify $M$ with
$\Gamma_c(\Mm,\pi)$ and we define the pullback $\Gg_1$-module
$\phi^*M:=\Gamma_c(\phi^*(\Mm),\pi)$.  The map $\phi$ induces  homomorphisms
\[\hat{\phi}^{(n)}:\text{Hom}_{\Gg_2}(\ZZ[\Gg_2^{(n)}], M)\to
  \text{Hom}_{\Gg_1}(\ZZ[\Gg_1^{(n)}], \phi^*M), \]\[
  \hat{\phi}^{(n)}(h)=\Xi_n^{-1}(\phi^*(\Xi_n(h))),\]  
for all $h\in \text{Hom}_{\Gg_2}(\ZZ[\Gg_2^{(n)}], M)$,
where  $\Xi_n$ was defined in  
Remark \ref{rem:Xi_n}. Also, since $\hat{\phi}^{(n)}$ are compatible
with the coboundary maps, $\phi$ determines  cohomology group
homomorphisms \[\phi^*:H^*(\Gg_2,M)\to 
H^*(\Gg_1,\phi^*M)\] 
and $\phi\mapsto \phi^*$ reverses composition.
\end{cor}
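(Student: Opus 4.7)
The plan is to establish the claims by systematically translating everything to the sheaf side using Proposition~\ref{lem:hom_isom}, and exploiting the contravariant functoriality of the pullback.

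First, I would verify that the formula $\hat{\phi}^{(n)}(h) = \Xi_n^{-1}(\phi^*(\Xi_n(h)) \circ \psi_n)$ is well-defined by confirming that $\phi^*(\Xi_n(h)) \circ \psi_n$ is a morphism of $\Gg_1$-sheaves from $\ZZ[\Gg_1^{(n)}]^s$ to $\phi^*\Mm$, so that $\Xi_n^{-1}$ can be applied. The map $\psi_n$ is $\Gg_1$-equivariant: the $\Gg_1$-action on $\ZZ[\Gg_1^{(n)}]^s$ is left multiplication on the first coordinate, the $\Gg_1$-action on $\phi^*(\ZZ[\Gg_2^{(n)}]^s)$ is $g\cdot(s(g),a) = (r(g), \phi(g)\cdot a)$, and $\phi$ being a groupoid homomorphism ensures compatibility. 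Continuity of $\psi_n$ follows from continuity of $\phi^{(n)}$, and $\phi^*$ takes $\Gg_2$-sheaf morphisms to $\Gg_1$-sheaf morphisms by construction.

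Second, I would prove compatibility with coboundary maps: for $h \in \text{Hom}_{\Gg_2}(\ZZ[\Gg_2^{(n+1)}], M)$,
\[
\delta_n(\hat{\phi}^{(n+1)}(h)) = \hat{\phi}^{(n+2)}(\delta_n(h)).
\]
Since $\delta_n(\varphi) = \varphi \circ b_{n+1}$, this reduces to showing that the system of maps $\psi_\bullet$ intertwines the bar differentials on each side. The key input is that $\phi^{(n+2)} \circ b_i^{n+1} = b_i^{n+1} \circ \phi^{(n+2)}$ for every face map, which follows directly from the fact that $\phi$ preserves composition, identities, and sources/ranges. This lifts to the sheaves $\ZZ[\Gg^{(\cdot)}]^s$, translates via Proposition~\ref{prop:sheaf_coh} into an identity relating $\psi_{n+1}\circ \od^{\Gg_1}_{n+1}$ and $\phi^*(\od^{\Gg_2}_{n+1}) \circ \psi_{n+2}$, and descends through $\Xi$ using its naturality.

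Third, once compatibility with $\delta$ is established, the $\hat{\phi}^{(n)}$ descend to cohomology homomorphisms $\phi^*: H^n(\Gg_2, M) \to H^n(\Gg_1, \phi^*M)$ by standard homological algebra. The reversal of composition follows from the contravariance of the pullback sheaf functor combined with the naturality of $\Xi$, applied to two composable groupoid homomorphisms. I expect the main technical burden to be the second step, where one must carefully track three levels (modules, sheaves, and cochains) and confirm that the intertwining statement at the face-map level propagates through $\Xi$; once that bookkeeping is handled, the remaining claims reduce to formal consequences of the categorical setup.
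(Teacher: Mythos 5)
Your proposal is correct and follows essentially the same route as the paper, which likewise defines $\hat{\phi}^{(n)}(h)=\Xi_n^{-1}(\phi^*(\Xi_n(h))\circ\psi_n)$ and reduces coboundary compatibility and contravariance to the pullback functor together with the fact that $\phi$ intertwines the face maps. One cosmetic slip: the intertwining identity should read $\phi^{(n+1)}\circ b_i^{n+1}=b_i^{n+1}\circ\phi^{(n+2)}$ (the superscript on the left-hand $\phi$ must match the target $\Gg^{(n+1)}$), which does not affect the argument.
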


\begin{rmk}\label{rm:explicit_phi_star}
 When we
  identify $M$ with $\Gamma_c(\Mm,\pi)$ and $\phi^*M$ with
  $\Gamma_c(\phi^*(\Mm),\pi)$, the homomorphism $\hat{\phi}^{(n)}$ has the following explicit formula.  
  
  \noindent Let $h\in
  \text{Hom}_{\Gg_2}(\ZZ[\Gg_2^{(n)}],\Gamma_c(\Mm,\pi))$ and $f\in
  \ZZ[\Gg_1^{(n)}]$. Then
  \[
    \hat{\phi}^{(n)}(h)(f)(x)=\bigl(x,\sum_{r(g_1)=x}f(g_1,\dots,g_n)h(\langle \phi^{(n)}(g_1,\dots,g_n) \rangle_{V_{g_1,\dots,g_n}})(\phi(x))\bigr),
  \]
 where $V_{g_1,\dots,g_n}$ are compact open subsets of $\Gg_2^{(n)}$ such that
 $\phi^{(n)}(g_1,\dots,g_n)\in V_{g_1,\dots,g_n}$ and the
 restriction of the anchor map $p$ to each of these sets is a homeomorphism. 
\end{rmk}

\begin{rmk}\label{rm:injective-surjective}
   In the next section, we will use the fact that if $\phi$ is a
  surjective \'etale groupoid homomorphism then $\hat{\phi}^{(n)}$ and  $\phi^*$ are
  injective. Indeed, assume that $h\in
  \text{Hom}_{\Gg_2}(\ZZ[\Gg_2^{(n)}], M)$ is such that
  $\hat{\phi}^{(n)}(h)=0$. Since $\Xi_n^{-1}$ is an isomorphism, it
  follows that $\phi^*(\Xi_n(h))=0$. Let $(g_1,\dots,g_n)\in
  \Gg_1^{(n)}$ and $U$ a compact open neighborhood of
  $(g_1,\dots,g_n)$  such
  that $p|_U$ is a homeomorphism onto $p(U)$. Then
  \[
    \phi^*(\Xi_n(h))(\langle g_1,\dots, g_n \rangle_U)=0.
  \]
  By the definition of $\phi^*$ or, more precisely, $\phi^*(\cdot
  )\circ \psi_n$,
  \[
    (\phi(r(g_1)),\Xi_n(h)([\phi(g_1),\dots,\phi(g_n)]))=0,
  \]
  which, by the definition of $\Xi_n$, implies that $h(\langle
  \phi(g_1),\dots,\phi(g_n) \rangle_V)=0$, where $V$ is any compact open
  neighborhood of $(\phi(g_1),\dots,\phi(g_n))$ in $\Gg_2^{(n)}$ such that
  $p|_V$ is a homeomorphism onto $p(V)$. Since $\phi$ is surjective,
  the span of the set of functions $\langle   \phi(g_1),\dots,\phi(g_n) \rangle_V$
  is dense in $\ZZ[\Gg_2^{(n)}]$. Hence $h=0$ and, thus, $\hat{\phi}^{(n)}$ is injective.
\end{rmk}
\bigskip

\section{The exact sequence of cohomology for a cocycle}
\bigskip
We recall the definition of the skew product groupoid and prove an exact sequence of cohomology, our second main result.

Let $\Gg$ be an \'etale groupoid. If $c:\Gg\to \ZZ$ is a continuous homomorphism, the skew product groupoid $\Gg\times_c\ZZ$ has unit space identified with $\Gg^{(0)}\times \ZZ$ and for $(g,k)\in \Gg\times \ZZ$,
\[r(g,k)=(r(g),k),\;\; s(g,k)=(s(g), k+c(g)),\]with multiplication and inverse\[(g,k)(h, k+c(g))=(gh, k),\;\; (g,k)^{-1}=(g^{-1}, k+c(g)).\]
There is an action $\hat{c}: \ZZ\curvearrowright \Gg\times_{c}\ZZ$ with generator $\hat{c}_1(g,k)=(g, k+1)$. Note that $\hat{c}_1:\Gg\times_c\ZZ\to \Gg\times_c\ZZ$ is a groupoid isomorphism.

To compute the homology of certain Exel-Pardo groupoids associated to self-similar actions without using spectral sequences, Ortega proved in Lemma 1.3 of \cite{O20} the existence of a long exact sequence of homology; see also section 3.2.1 in \cite{S} for a simplified proof. More precisely, for an ample groupoid $\Gg$  and a cocycle $c:\Gg\to \ZZ$, there is an exact sequence in homology with coefficients in $\ZZ$
 \bigskip
\[0\longleftarrow H_0(\Gg)\longleftarrow H_0(\Gg\times_c\ZZ)\stackrel{id-c_*^{(0)}}{\longleftarrow}H_0(\Gg\times_c\ZZ)\longleftarrow H_1(\Gg)\longleftarrow\cdots\]
\[\longleftarrow H_n(\Gg)\longleftarrow H_n(\Gg\times_c\ZZ)\stackrel{id-c_*^{(n)}}{\longleftarrow}H_n(\Gg\times_c\ZZ)\longleftarrow H_{n+1}(\Gg)\longleftarrow\cdots\]
Here $c_*^{(n)}:  \ZZ[(\Gg\times_c\ZZ)^{(n)}]\to \ZZ[(\Gg\times_c\ZZ)^{(n)}]$ are the maps induced by the generator $\hat{c}_1$ of the action $ \ZZ\curvearrowright \Gg\times_{c}\ZZ$ and we also denote by $c_*^{(n)}$ the induced maps between homology
groups.  Note that
\[c_*^{(0)}:\ZZ[\Gg\times_c\ZZ]\to \ZZ[\Gg\times_c\ZZ],\; c_*^{(0)}(f)(g,k)=f(g,k-1)\]
and that $c_*^{(n)}$ are $\Gg\times_c\ZZ$-module maps. We will prove that there is a dual long exact sequence for cohomology.

\begin{rmk}\label{pi}
 
The map
  $\pi:\Gg\times_c\ZZ\to \Gg$, $\pi(g,k)=g$ is an onto \'etale groupoid
  homomorphism. Therefore, if $M$ is a $\Gg$-module, we can apply
  Corollary \ref{cor:pullback} and obtain the pullback
  $\Gg\times_c\ZZ$-module $\pi^*M$ and a homomorphism
  \[\hat{\pi}^{(n)}:\text{Hom}_{\Gg}(\ZZ[\Gg^{(n)}],M)\to 
  \text{Hom}_{\Gg\times_c \ZZ}(\ZZ[(\Gg\times_c\ZZ)^{(n)}],\pi^*M)\] compatible with the coboundary maps.
  
  Also, the groupoid isomorphism $\hat{c}_1:\Gg\times_c\ZZ\to \Gg\times_c\ZZ$ determines an isomorphism
  \[\hat{c}^{(n)}:\text{Hom}_{\Gg\times_c\ZZ}(\ZZ[(\Gg\times_c\ZZ)^{(n)}], \pi^*M)\to \text{Hom}_{\Gg\times_c\ZZ}(\ZZ[(\Gg\times_c\ZZ)^{(n)}],\pi^*M)\]
  since $\hat{c}_1^*\pi^*M= \pi^*M$.
 
\end{rmk}

\begin{thm}\label{les}
Given $\Gg$ an ample groupoid  and a cocycle $c:\Gg\to\ZZ$, for any $\Gg$-module $M$ we have a long exact sequence in cohomology
\[0\rightarrow H^0(\Gg, M)\rightarrow H^0(\Gg\times_c \ZZ,\pi^*M)\stackrel{id-c^{*(0)}}{\longrightarrow}H^0(\Gg\times_c \ZZ, \pi^*M)\rightarrow H^1(\Gg, M)\rightarrow\cdots\]
\[\rightarrow H^n(\Gg, M)\rightarrow H^n(\Gg\times_c \ZZ, \pi^*M)\stackrel{id-c^{*(n)}}{\longrightarrow}H^n(\Gg\times_c \ZZ, \pi^*M)\rightarrow H^{n+1}(\Gg, M)\rightarrow\cdots,\]
where $\pi^*M$ is the pullback $\Gg\times_c\ZZ$-module and we  denote by $c^{*(n)}$ the induced maps between cohomology groups.
\end{thm}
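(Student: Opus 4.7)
The plan is to extract the long exact sequence from the standard zig--zag sequence associated to a short exact sequence of cochain complexes. Setting $C^n:=\text{Hom}_{\Gg}(\ZZ[\Gg^{(n+1)}], M)$ and $\tilde C^n:=\text{Hom}_{\Gg\times_c\ZZ}(\ZZ[(\Gg\times_c\ZZ)^{(n+1)}], \pi^*M)$, the complexes $(C^\bullet,\delta)$ and $(\tilde C^\bullet,\tilde\delta)$ compute $H^\bullet(\Gg,M)$ and $H^\bullet(\Gg\times_c\ZZ,\pi^*M)$ respectively. First I would produce the short exact sequence
\[
 0\longrightarrow C^n\stackrel{\hat\pi^{(n+1)}}{\longrightarrow}\tilde C^n\stackrel{id-\hat c^{(n+1)}}{\longrightarrow}\tilde C^n\longrightarrow 0,
\]
with both maps coming from Remark \ref{pi}. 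They are cochain maps: $\hat\pi^{(n+1)}$ by Corollary \ref{cor:pullback}, and $\hat c^{(n+1)}$ because $\hat c_1$ is a groupoid isomorphism.

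Injectivity of $\hat\pi^{(n+1)}$ follows from Remark \ref{rm:injective-surjective}, since $\pi$ is surjective. Exactness in the middle uses $\pi\circ\hat c_1=\pi$, which forces $\hat c^{(n+1)}\circ\hat\pi^{(n+1)}=\hat\pi^{(n+1)}$, so $\text{im}(\hat\pi^{(n+1)})\subseteq\ker(id-\hat c^{(n+1)})$. For the reverse inclusion I would exploit the identification $(\Gg\times_c\ZZ)^{(n+1)}\cong\Gg^{(n+1)}\times\ZZ$ under which $\hat c_1$ is translation in the $\ZZ$-factor: a morphism fixed by $\hat c^{(n+1)}$ is constant along the $\ZZ$-fibers and, via the sheaf--module correspondence in Proposition \ref{lem:hom_isom}, descends uniquely to an element of $C^n$ whose image under $\hat\pi^{(n+1)}$ is the original.

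The main obstacle is the surjectivity of $id-\hat c^{(n+1)}$. The key observation is the $\ZZ$-grading
\[
 \ZZ[(\Gg\times_c\ZZ)^{(n+1)}]=\bigoplus_{k\in\ZZ}L_k,
\]
where $L_k$ is the subgroup of functions supported on $\Gg^{(n+1)}\times\{k\}$; any compactly supported element is a finite sum over $k$. On the Hom side, $\hat c^{(n+1)}$ acts as a shift between levels. Given $\psi\in\tilde C^n$, I would decompose its action into level components $\tilde\psi_k$ and solve the one-sided difference equation $\tilde\varphi_k-\tilde\varphi_{k-1}=\tilde\psi_k$ by telescoping, normalizing $\tilde\varphi_0=0$ and using finite partial sums above and below level $0$. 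Reassembling gives $\varphi\in\tilde C^n$ with $(id-\hat c^{(n+1)})\varphi=\psi$; well-definedness on any compactly supported argument is automatic because only finitely many level components contribute, and $\Gg\times_c\ZZ$-equivariance of $\varphi$ is inherited from $\psi$ since the groupoid action preserves the level decomposition up to the shift encoded by $\hat c_1$.

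With the short exact sequence of cochain complexes established, the zig--zag lemma produces the claimed long exact sequence in cohomology, and the connecting homomorphism $H^n(\Gg\times_c\ZZ,\pi^*M)\to H^{n+1}(\Gg,M)$ arises from the usual diagram chase through the three complexes, completing the proof.
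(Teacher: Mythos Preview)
Your overall architecture is exactly the paper's: establish the short exact sequence
\[
0\to \text{Hom}_{\Gg}(\ZZ[\Gg^{(n+1)}],M)\xrightarrow{\hat\pi^{(n+1)}}\tilde C^n\xrightarrow{id-\hat c^{(n+1)}}\tilde C^n\to 0
\]
and invoke the long exact sequence in cohomology. Your justifications for injectivity of $\hat\pi^{(n+1)}$ and for $\text{im}(\hat\pi^{(n+1)})\subseteq\ker(id-\hat c^{(n+1)})$ match the paper's, and your reverse inclusion argument is essentially the paper's as well.

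There is, however, a genuine gap in your surjectivity step. You grade $\ZZ[(\Gg\times_c\ZZ)^{(n+1)}]=\bigoplus_k L_k$ by the $\ZZ$-coordinate coming from the identification $(\Gg\times_c\ZZ)^{(n+1)}\cong\Gg^{(n+1)}\times\ZZ$, telescope to build $\varphi$, and then assert that ``$\Gg\times_c\ZZ$-equivariance of $\varphi$ is inherited from $\psi$ since the groupoid action preserves the level decomposition up to the shift encoded by $\hat c_1$.'' This last claim is where the argument breaks. If the $\ZZ$-coordinate is taken to be $k_0$ (the coordinate of the first entry $\gamma_0$, which is what the anchor map sees), then left multiplication by $(h,l)\in\Gg\times_c\ZZ$ sends level $k_0$ to level $k_0-c(h)$; this is \emph{not} a power of $\hat c_1$, and a direct check shows the telescoped $\varphi$ fails to satisfy $\varphi((h,l)\cdot[\gamma])=(h,l)\cdot\varphi([\gamma])$ whenever $c(h)\ne 0$. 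So the candidate you produce need not lie in $\text{Hom}_{\Gg\times_c\ZZ}$.

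There are two clean fixes. First, grade instead by the $\ZZ$-coordinate of $s(\gamma_0)=r(\gamma_1)$: since left multiplication by $(h,l)$ changes only $\gamma_0$ and leaves $\gamma_1,\dots,\gamma_n$ untouched, this grading \emph{is} $\Gg\times_c\ZZ$-invariant, $\hat c_1$ still acts as the shift, and your telescoping then produces a genuinely equivariant $\varphi$. Second (and equivalently), pass to the cocycle model via Theorem~\ref{thm:main}: on $C^n(\Gg\times_c\ZZ,\pi^*M)$ there is no equivariance constraint, the telescoping solves $\bar\varphi(\bar\gamma,k)-\bar\varphi(\bar\gamma,k+1)=\bar\psi(\bar\gamma,k)$ directly, and applying $\rho^n$ returns an element of $\text{Hom}_{\Gg\times_c\ZZ}$ automatically. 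The paper itself is terse here (it writes only ``since $\hat c_1$ does not have fixed points, it follows that $id-\hat c^{(n)}$ is onto''), but your more explicit route needs one of these adjustments to close.
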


\begin{proof}
We claim that for each $n$ we have a short exact sequence 
\[
0\to \text{Hom}_{\Gg}(\ZZ[\Gg^{(n)}],M)\stackrel{\hat{\pi}^{(n)}}{\longrightarrow}\text{Hom}_{\Gg\times_c\ZZ}(\ZZ[(\Gg\times_c\ZZ)^{(n)}],\pi^*M)\stackrel{id-\hat{c}^{(n)}}{\longrightarrow}\]\[\stackrel{id-\hat{c}^{(n)}}{\longrightarrow}\text{Hom}_{\Gg\times_c\ZZ}(\ZZ[(\Gg\times_c\ZZ)^{(n)}],\pi^*M)\to 0,
\]
where $\hat{\pi}^{(n)}$ and $\hat{c}^{(n)}$ were defined in Remark \ref{pi}. 

Indeed,  $\hat{\pi}^{(n)}$ is injective  since $\pi^{(n)}:(\Gg\times_c\ZZ)^{(n)}\to \Gg^{(n)}$ is onto.
(see Remark \ref{rm:injective-surjective}).

Since $\pi\circ id=\pi\circ \hat{c}_1$ as groupoid homomorphisms $\Gg\times_c\ZZ\to\Gg\times_c\ZZ\to \Gg$, we obtain $(id-\hat{c}^{(n)})\circ \hat{\pi}^{(n)}=0$ and hence $\text{im } \hat{\pi}^{(n)}\subseteq \ker(id-\hat{c}^{(n)})$. Since $\hat{c}_1$ does not have fixed points, it follows that $id-\hat{c}^{(n)}$ is onto.
We only need to prove that $\ker(id-\hat{c}^{(n)})\subseteq \text{im }\hat{\pi}^{(n)}$. 

Let  $\lambda\in \text{Hom}_{\Gg\times_c\ZZ}(\ZZ[(\Gg\times_c\ZZ)^{(n)}],\pi^*M)$ such that $\hat{c}^{(n)}(\lambda)=\lambda$. We need to find $\varphi \in \text{Hom}_{\Gg}(\ZZ[\Gg^{(n)}],M)$ such that $\hat{\pi}^{(n)}(\varphi)=\lambda$. 

For the elements of $(\Gg\times_c\ZZ)^{(n)}$ we use the notation $((g_1,k_1),\dots ,(g_n,k_n))$ instead of $((g_1,k),(g_2,k+c(g_1)),\dots ,(g_n,k+c(g_1)+\cdots +c(g_{n-1}))$. Recall that 
\[\hat{c}_1^{(n)}:(\Gg\times_c\ZZ)^{(n)}\to (\Gg\times_c\ZZ)^{(n)},\]\[ \hat{c}_1^{(n)}((g_1,k_1),\dots ,(g_n, k_n))=((g_1, k_1+1),\dots,(g_n,k_n+1)).\]

In the next argument, the multiple use of $p$ as the anchor map from
   $\Gg^{(n)}$ onto $\Gg^{(0)}$ and as the anchor map from
   $(\Gg\times_c\ZZ)^{(n)}$ onto $\Gg^{(0)}\times \ZZ$ should be clear
 from the context. For $((g_1,k_1),\dots,(g_n,k_n))\in(\Gg\times_c\ZZ)^{(n)}$ and $l\in \ZZ$, consider 
 $V$  a compact open neighborhood of $((g_1,k_1),\dots,(g_n,k_n))$
 such that $p|_V$ is a homeomorphism, and $V_l$  a compact open
 neighborhood of $((g_1,k_1+l),\dots,(g_n,k_n+l))$ such that
 $p|_{V_l}$ is a homeomorphism. Using the explicit formula from Remark \ref{rm:explicit_phi_star}, the  fact that $\hat{c}^{(n)}(\lambda)=\lambda$ implies that  \[
   \lambda(\langle ((g_1,k_1),\dots,(g_n,k_n) )\rangle_V)(x,k)=
   \lambda(\langle ((g_1,k_1+l),\dots,(g_n,k_n+l)) \rangle_{V_l})(x,k+l).
 \]
 
Consider  $\varphi \in \text{Hom}_{\Gg}(\ZZ[\Gg^{(n)}],M)$ defined via
 \[
   \varphi(\langle g_1,\dots,g_n \rangle_U)(x):=
  \lambda(\langle  ((g_1,k_1),\dots,(g_n,k_n)) \rangle_V)(x,k),
 \]
 where   $U$ is a
 compact open neighborhood of $(g_1,\dots,g_n)\in \Gg^{(n)}$ such that $p|_U$
 is a homeomorphism. The map $\varphi$ is well defined, since  if $(x,k+l)$ is another element in
 $\pi^{-1}(x)$, then the only element in 
 $V_l\bigcap (\pi^{(n)})^{-1}(g_1,\dots,g_n)$ such
 that $(x,k+l)\in p(V_l)$ is $((g_1,k_1+l),\dots,(g_n,k_n+l))$. 
Using again Remark \ref{rm:explicit_phi_star}, it follows that $\hat{\pi}^{(n)}(\varphi)=\lambda$ and hence $\ker(id-\hat{c}^{(n)})= \text{im }\hat{\pi}^{(n)}$.

Since the maps in the above short exact sequence are compatible with
the coboundary maps, we get a short exact sequence of cochain
complexes and  we can use   the associated long exact sequence of
cohomology  to get our result, see Theorem 1.3.1 in \cite{W}. 
\end{proof}

\begin{cor}
If we have a minimal homeomorphism of the Cantor set $X$, the cohomology of the action groupoid $\ZZ\ltimes X$ can  be computed using the above  long exact sequence, see also Example \ref{transf}.
\end{cor}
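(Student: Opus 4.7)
The plan is to apply Theorem \ref{les} to $\Gg = \ZZ\ltimes X$ equipped with the canonical cocycle $c:\Gg\to\ZZ$ given by $c(n,x)=n$, and then to observe that the skew product $\Gg\times_c\ZZ$ admits a simpler description which makes its cohomology vanish in positive degrees.

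First, minimality of $T$ on the Cantor set $X$ forces $T$ to be free: any periodic orbit would be a finite closed $T$-invariant proper subset of $X$, contradicting minimality on the infinite space $X$. Hence $\Gg$ is principal, and the cocycle $c$ is (trivially) injective on isotropy.

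Next I would identify the skew product explicitly. An arrow $((n,x),k)$ of $\Gg\times_c\ZZ$ satisfies $r((n,x),k)=(T^n x,k)$ and $s((n,x),k)=(x,k+n)$. Writing $l=k+n$, this exhibits $\Gg\times_c\ZZ$ as the transformation groupoid $\ZZ\ltimes(X\times\ZZ)$ for the action $n\cdot(x,l)=(T^n x,l-n)$. This action is free (the second coordinate is translated by $-n$) and proper (since $\ZZ$ is discrete, for any compact $K\subset X\times\ZZ$ only finitely many $n$ satisfy $n\cdot K\cap K\neq\emptyset$). The orbit space is homeomorphic to $X$ via $[(x,l)]\mapsto T^l x$, since each orbit contains the unique representative $(T^l x,0)$.

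Finally, by the Morita invariance of groupoid cohomology established after Theorem \ref{thm:main}, the proper principal groupoid $\Gg\times_c\ZZ$ has cohomology equal to that of the trivial groupoid $X$. A direct inspection of the dual bar complex shows that for the trivial groupoid $X$ any coefficient module $N$ yields $H^0(X,N)=N$ and $H^n(X,N)=0$ for $n\ge 1$. Inserting this vanishing into the long exact sequence of Theorem \ref{les} collapses it to
\[
0\to H^0(\Gg,M)\to N \stackrel{\operatorname{id}-c^{*(0)}}{\longrightarrow} N \to H^1(\Gg,M)\to 0
\]
with $N=H^0(\Gg\times_c\ZZ,\pi^*M)$, together with $H^n(\Gg,M)=0$ for $n\ge 2$. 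This recovers the familiar fact that $H^*(\Gg,M)$ behaves like the group cohomology of $\ZZ$ with coefficients in $N$, namely invariants in degree zero and coinvariants in degree one. The only step requiring real care is the verification of properness of the $\ZZ$-action on $X\times\ZZ$ together with the identification of its orbit space; both follow at once from the discreteness of the second $\ZZ$ factor.
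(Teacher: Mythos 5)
Your proposal is correct and follows essentially the same route as the paper, whose justification is the computation in Example \ref{transf}: apply Theorem \ref{les} with the canonical cocycle $c(n,x)=n$, note that the skew product $\Gg\times_c\ZZ$ is a proper principal groupoid equivalent to the space $X$ (the paper simply asserts it is ``similar to $X$''), so its cohomology vanishes in positive degrees by Example \ref{ex:Coh_space} and Morita invariance, and the long exact sequence collapses to the four-term sequence you wrote. Your opening remark that minimality forces freeness of $T$ is superfluous (the skew product is free and proper regardless, as your own second-coordinate argument shows), but harmless.
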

 
 \section{Examples}
 
 \bigskip
 We illustrate the theory by several computations of the cohomology groups.
 \begin{example}\label{ex:Coh_space}
Let $X$ be a zero-dimensional space (i.e. totally disconnected). For
$\Gg=X$ viewed as an ample groupoid with trivial multiplication, we
identify $\Gg^{(n)}$ with $X$ for all $n\ge 0$ and all the face maps
 $\partial_i^n:\Gg^{(n)}\to\Gg^{(n-1)}$ and
$b_i^n:\Gg^{(n+1)}\to
\Gg^{(n)}$   become the identity. 
Therefore, for $A$ an abelian group, the  differentials $d_n : C_c(X, A) \to C_c(X, A)$ are the zero maps for $n=0$ or $n$ odd and the identity for $n\ge 2$ even. It follows that \[H_0(X,A)=\ker d_0=C_c(X,A)\;\text{ and }\;H_n(X,A)=0\;\text{ for } n\ge 1.\]

If we dualize the chain complex for $M$ a $\Gg$-module, the differentials
$\delta_n:\text{Hom}_X(\ZZ[X],M)\to \text{Hom}_X(\ZZ[X],M)$ are the zero
maps if $n$ is even and the identity for $n$ odd since $b_n =id$ for $n$ even and $b_n = 0$ for n odd. 
If $M=\Gamma_c(X\times \ZZ)\cong \ZZ[X]$, we get the cochain complex  
\[\text{Hom}_X(\ZZ[X],\ZZ[X])\stackrel{\delta_0}{\rightarrow}
  \text{Hom}_X(\ZZ[X],\ZZ[X])\stackrel{\delta_1}{\rightarrow}
  \text{Hom}_X(\ZZ[X],\ZZ[X])\rightarrow\cdots\]
where $\delta_n=0$ for $n$ even and $\delta_n=id$ for $n$ odd. It follows that \[H^0(X,\ZZ)=\ker \delta_0\cong C(X,\ZZ) \text{ and
} H^n(X,\ZZ)=0 \text{ for } n\ge 1.\]
Indeed, $\ker \delta_0=\text{Hom}_X(\ZZ[X],\ZZ[X])$ and one can
identify $\text{Hom}_X(\ZZ[X],\ZZ[X])$ with
$\Gamma(X,\underline{\ZZ})\cong C(X,\ZZ)$, where $\underline{\ZZ}$ is the constant
sheaf over $X$ with fiber $\ZZ$, via the map that sends $\varphi\in
\text{Hom}_X(\ZZ[X],\ZZ[X])$ to the section defined by $x\mapsto
\varphi(\langle x \rangle_U)(x)$, where $U$ is any compact open 
neighborhood of $x\in X$.
\end{example}

\begin{rmk}

In Addendum 3 of \cite{K88}, Kumjian proves the existence of an exact sequence of sheaf cohomology for inductive limits of ultraliminary groupoids, involving the derived functor $\varprojlim{\!}^1$ of the projective limit functor $\varprojlim$, see also Example 4.3 in \cite{DKM}. Recall that for a sequence of abelian groups and homomorphisms \[\cdots\to A_2\stackrel{\alpha_2}{\rightarrow} A_1\stackrel{\alpha_1}{\rightarrow} A_0\] we define
\[\beta:\prod_{i=0}^\infty A_i\to \prod_{i=0}^\infty A_i, \; \beta((g_i))=(g_i-\alpha_{i+1}(g_{i+1}))\] and then $\varprojlim A_i=\ker \beta$ and $\varprojlim{\!}^1 A_i=$ coker $\beta$. 

More precisely, given a sequence of local homeomorphisms \[X_0\stackrel{\varphi_0}{\longrightarrow} X_1\stackrel{\varphi_1}{\longrightarrow} X_2\stackrel{\varphi_2}{\longrightarrow}\cdots\] with $X_n$ locally compact spaces, let 
\[\Gg_n=R(\psi_n)=\{(x,y)\in X_0\times X_0\;\mid\; \psi_n(x)=\psi_n(y)\}\] 
be the equivalence relation  on $X_0$ determined by \[\psi_n=\varphi_{n-1}\circ\cdots\circ\varphi_0:X_0\to X_n\] for $n\ge 1$, and let $\ds \Gg=\bigcup_{n=1}^\infty\Gg_n$. Then $\Gg_n$ has the same cohomology as $X_n$ and for all $q\ge 1$  there is a short exact sequence
\begin{equation}\label{eq:SESEQ_Alex}
  0\to \varprojlim{}^1H^{q-1}(X_n, \Aa^n)\to H^q(\Gg, \Aa)\to
  \varprojlim H^q(X_n, \Aa^n)\to 0,
\end{equation}
where $\Aa$ is a $\Gg$-sheaf and $\Aa^n$ is the sheaf over $X_n$ corresponding to $\Aa$. For $q=0$ it follows that $H^0(\Gg,\Aa)\cong \varprojlim H^0(X_n, \Aa^n)$.  


\end{rmk}

Recall from \cite{FKPS} that an ample groupoid $\Gg$ is called elementary if it is isomorphic to the equivalence relation \[R(\psi)=\{(y_1,y_2)\in Y\times Y\; \mid\; \psi(y_1)=\psi(y_2)\},\] determined by a local homeomorphism $\psi:Y\to X$ between zero-dimensional spaces. Since $X$ and $R(\psi)$ are equivalent groupoids via $Y$, they have the same homology. An ample groupoid $\Gg$ is called $AF$ if it is a union of open elementary subgroupoids with the same unit space.  If $\Gg$ is an $AF$-groupoid with unit space $X$, then $\Gg=\varinjlim \Gg_n$, where $\Gg_n=R(\psi_n)$ for some local homeomorphisms $\psi_n:X\to X_n$ and there are maps $\varphi_n:X_n\to X_{n+1}$ such that $\varphi_n\circ\psi_n=\psi_{n+1}$. The local homeomorphisms $\varphi_n$ induce   group homomorphisms \[\varphi_{n*}:\ZZ[X_n]\to \ZZ[X_{n+1}],\;\; \varphi_{n*}(f)(x_{n+1})=\sum_{\varphi_n(x_n)=x_{n+1}}f(x_n)\] as in \eqref{eq:pi_star}. Moreover, since each $\Gg_n$ is equivalent with $X_n$, 
we obtain \[H_0(\Gg,\ZZ)\cong \varinjlim(\ZZ[X_n],\varphi_{n*})\]  and $H_n(\Gg,\ZZ)=0$ for $n\ge 1$.

Given a $\Gg$-module $M$, we also denote  by $M$ the corresponding $\Gg_n$-module. Dualizing the bar resolution \eqref{bar} for each $\Gg_n$,  consider the tower of cochain complexes \[\cdots \to C_{n+1}\to C_n\to \cdots\to C_1\] with $C^k_n=\text{Hom}_{\Gg_n}(\ZZ[\Gg_n^{(k)}],M)$, used to compute $H^*(\Gg_n,M)$. Since the inclusion $\ZZ[\Gg_n^{(k)}]\subseteq \ZZ[\Gg_{n+1}^{(k)}]$ splits because each $\ZZ[\Gg_{n+1}^{(k)}]/\ZZ[\Gg_n^{(k)}]$ is a free abelian group, the maps $C^k_{n+1}\to C^k_n$ are onto for each $k$, and the tower satisfies the Mittag-Leffler condition (see Definition 3.5.6 in \cite{W}). Since  $\Gg=\varinjlim \Gg_n$ and $H^*(\Gg,M)$ is the cohomology of the cochain complex $C$ with \[C^k=\varprojlim C^k_n=\varprojlim\text{Hom}_{\Gg_n}(\ZZ[\Gg^{(k)}_n],M)\cong\text{Hom}_{\Gg}( \varinjlim \ZZ[\Gg_n^{(k)}],M) ,\]
a consequence of Theorem 3.5.8 in \cite{W} gives
\[0\to \varprojlim{\!}^1H^{q-1}(\Gg_n,M)\to H^q(\Gg,M)\to \varprojlim H^q(\Gg_n,M)\to 0. \]
 In particular, since $\Gg_n$ is equivalent with $X_n$, by taking $M=\Gamma_c(\underline{\ZZ})$ it follows that  
 \[H^0(\Gg,\ZZ)\cong \varprojlim(C(X_n,\ZZ),\varphi^*_{n}),\]
 \[H^1(\Gg,\ZZ)\cong \varprojlim{\!}^1H^0(\Gg_n,\ZZ)\cong \varprojlim{\!}^1(C(X_n,\ZZ), \varphi^*_{n}),\]
 where, using Remark \ref{rm:explicit_phi_star},  $\varphi^*_{n}:C(X_{n+1},\ZZ)\to C(X_n,\ZZ)$ is determined by $f\mapsto f\circ \varphi_n$ for $f\in $ Hom$_{X_{n+1}}(\ZZ[X_{n+1}], \ZZ[X_{n+1}])$ identified with $C(X_{n+1},\ZZ)$.

\begin{example} (The $UHF(p^\infty)$ groupoid)
 Let $X=\{1,2,...,p\}^\NN$ for $p\ge 2$ and let $\sigma:X\to X,\; \sigma(x_1x_2\dots)=x_2x_3\dots\;$ be the unilateral shift, which is a local homeomorphism. Then
 \[R(\sigma^n)=\{(x,y)\in X\times X:\sigma^n(x)=\sigma^n(y)\}\]
 are elementary groupoids for $n\ge 0$ and $H_0(R(\sigma^n),\ZZ)\cong C(X,\ZZ)$.  Consider the $UHF(p^\infty)$ groupoid
 \[\Ff_p=\bigcup_{n=0}^\infty R(\sigma^n).\]
 We get  $H_0(\Ff_p,\ZZ)\cong\varinjlim (C(X,\ZZ),\sigma_*)\cong \ZZ[\frac{1}{p}]$, where $\ds \sigma_*(f)(y)=\sum_{\sigma(x)=y}f(x)$.  Indeed, for $n\ge 1$ consider the  map
 \[h :C(X,\ZZ)\to  \ZZ\left[\frac{1}{p}\right],\;  h(\chi_{Z(\alpha_1\cdots\alpha_n)})=\frac{1}{p^n},\] and extended by linearity, where $Z(\alpha_1\cdots\alpha_n)$ is a cylinder set. Note that $Z(\emptyset)=X$, $h(\chi_X)=1$ and that $h$ is onto. Since  $\sigma_*(\chi_X)=p\cdot \chi_X$ and
 $\sigma_*(\chi_{Z(\alpha_1\cdots\alpha_n)})=\chi_{Z(\alpha_2\cdots\alpha_n)}$,
 it follows that \[h\circ\sigma_*=\ell_p\circ h,\] where
 $\ell_p:\ZZ[\frac{1}{p}]\to \ZZ[\frac{1}{p}]$ is multiplication by
 $p$, a bijection.  We get \[\varinjlim (C(X,\ZZ),\sigma_*)\cong
   \ZZ\left[\frac{1}{p}\right].\]

 To compute the
 cohomology, we use 
 the exact sequence
\[0\to \varprojlim{\!}^1H^{q-1}(R(\sigma^n),M)\to H^q(\Ff_p,M)\to \varprojlim H^q(R(\sigma^n),M)\to 0 \] for $M=\Gamma_c(\underline{\ZZ})$ and  the results of Example
 \ref{ex:Coh_space} to obtain
 \[
  0\to \varprojlim{}^1(H^{q-1}(X),\sigma^*)\to H^q(\Ff_p)\to
  \varprojlim (H^q(X),\sigma^*)\to 0,
 \]
 where $\sigma^*:C(X,\ZZ)\to C(X,\ZZ)$ is given by $\sigma^*(f)=f\circ
 \sigma$. 
 Therefore
 \[H^0(\Ff_p)\cong \varprojlim(C(X,\ZZ),\sigma^*)\cong \ZZ,\,
   H^1(\Ff_p)\cong \varprojlim{\!}^1(C(X,\ZZ), \sigma^*),
 \]
and $H^q(\Ff_p)=0$ for all $q\ge 2$.
Indeed, the only elements in the projective limit are the constant functions. Note that $H^1(\Ff_p)$ is uncountable.
 
 \end{example}

\begin{example}\label{transf}
For the transformation groupoid $\Gg=\Gamma \ltimes X$ associated to a  discrete group action $\Gamma \curvearrowright X$ on a Cantor set $X$, since $\Gg^{(n)}\cong\Gamma^n\times X$,
the homology chain complex for $A$  an abelian group has the form
\[0\leftarrow C_c(X, A)\leftarrow C_c(\Gamma\times X, A)\leftarrow\cdots\leftarrow C_c(\Gamma^n\times X, A)\leftarrow\cdots\]
and $H_n(\Gamma\ltimes X, A)\cong H_n(\Gamma, C(X,A))$ where  $C(X,A)$ is a $\Gamma$-module in the usual way. For $\Gamma=\ZZ$ with generator $\varphi\in\text{Homeo}(X)$, it is known that, see \cite{M12, Br}
\[H_0(\ZZ\ltimes X,A)\cong C(X,A)/\{f-f\circ\varphi^{-1}:f\in C(X,A)\},\; H_1(\ZZ\ltimes X,A)\cong A,\] and  $H_n(\ZZ\ltimes X,A)=0$ for $ n\ge 2$.

The dual complex for the transformation groupoid $\Gg=\Gamma \ltimes X$  becomes
\[0\rightarrow \text{Hom}_{\Gg}(\ZZ[X], M)\rightarrow \text{Hom}_{\Gg}(\ZZ[\Gamma\times X],M)\rightarrow \cdots\]\[\rightarrow \text{Hom}_{\Gg}(\ZZ[\Gamma^n\times X],M)\rightarrow\cdots\]
where $M$ is a $\Gg$-module.
It follows that \[H^n(\Gamma\ltimes X,M)\cong H^n(\Gamma, C(X,M)),\] the group cohomology of $\Gamma$ with coefficients in $C(X,M)$. For $\Gamma=\ZZ$, using the computation of cohomology of $\ZZ$ with coefficients from Chapter III in \cite{Br}, we get
\[H^0(\ZZ\ltimes X, M)\cong M,\; H^1(\ZZ\ltimes X, M)\cong C(X,M)/\{f-f\circ\varphi^{-1}:f\in C(X,M)\}\] and $H^n(\ZZ\ltimes X, M)=0$ for $n\ge 2$. This illustrates a particular case of Poincar\' e duality between homology and cohomology, see page 221 in \cite{Br}.

The same result for $M=\Gamma_c(\Gg^{(0)}\times \ZZ,\pi)$ is obtained by using the long exact sequence from Theorem \ref{les} if we consider $\Gg=\ZZ\ltimes X$ with cocycle $c:\Gg\to \ZZ,\; c(k,x)=k$. Then $\Gg\times_c\ZZ$ is similar to $X$ and therefore, after identifying the maps of the long exact sequence, 
\[H^0(\ZZ\ltimes X,\ZZ)\cong \ker(id-c^{*(0)})\cong \ZZ,\]\[ H^1(\ZZ\ltimes X,\ZZ)\cong \text{coker}(id-c^{*(0)})\cong C(X,\ZZ)/\{f-f\circ\varphi^{-1}:f\in C(X,\ZZ)\},\]\[ H^n(\ZZ\ltimes X,\ZZ)=0\;\text{for}\; n\ge 2.\]
\end{example}

\bigskip


\begin{thebibliography}{0000}

\bigskip

\bibitem{BDGW} C. B\"onicke, C. Dell'Aiera, J. Gabe and R. Willett, {\em Dynamic asymptotic dimension and Matui's HK conjecture}, Proc. London Math Soc (3) 2023 no. 4, 1182--1253.

\bibitem{Br} K. S. Brown, {\em Cohomology of groups,}  Graduate
Texts in Mathematics, 87, Springer-Verlag, New York, 1994.

\bibitem{CM} M. Crainic and I. Moerdijk, {\em A homology theory for \'etale groupoids}, J. Reine Angew.
Math. 521 (2000), 25--46. 

\bibitem{DKM} V. Deaconu, A. Kumjian and P. Muhly, {\em Cohomology of topological graphs and Cuntz-Pimsner algebras}, J. Operator Theory 46(2001), 251--264.



 
  \bibitem{FKPS} C. Farsi, A. Kumjian, D. Pask and A. Sims, {\em Ample groupoids: equivalence, homology, and Matui's HK conjecture},
M\"unster J. Math. 12 (2019), no. 2, 411--451.

\bibitem{FH} A. Forrest and J. Hunton, {\em The cohomology and $K$-theory of commuting homeomorphisms of the Cantor set}, Ergod. Th. \& Dynam. Sys. 19(1999), 611--625.
 
 \bibitem{GK} E. Gillaspy, A. Kumjian, {\em Cohomology for small categories: $k$-graphs and groupoids}, Banach J. Math. Anal. 12(2018), no. 3, 572--599.
 
 \bibitem{H76} A. Haefliger, {\em Differential cohomology}, Differential topology (Varenna, 1976), pp. 19--70, Liguori, Naples, 1979.
 
 \bibitem{K88} A. Kumjian, {\em On equivariant sheaf cohomology and elementary $C^*$-bundles}, J.  Operator Theory 20(1988), 207--240.
 
\bibitem{Li25} X. Li, {\em Ample Groupoids, Topological Full groups, Algebraic $K$- theory Spectra and Infinite Loop Spaces},  Forum of Mathematics, Pi (2025), Vol. 13: e9 1--56.

\bibitem{M12} H. Matui, {\em Homology and topological full groups of \'etale groupoids on totally disconnected spaces}, Proc. London Math. Soc. (3) 104 (2012),  27--56.

\bibitem{MM} H. Matui and T. Mori, {\em Cup and cap products for cohomology and homology groups of ample groupoids}, arXiv: 2411.14906v1.

\bibitem{M22} A. Miller, {\em K-theory for \'etale groupoid $C^*$-algebras via groupoid correspondences and spectral sequences}, thesis.

\bibitem{M23} A. Miller, {\em Ample groupoid homology and \'etale correspondences}, arXiv:2304.13473v3, J. of Noncommutative Geometry 2024.


\bibitem{O20} E. Ortega, {\em The homology of the Katsura-Exel-Pardo groupoid}, J. Noncommut. Geom. 14(2020), 913--935.

  
\bibitem{PY22} V. Proietti and M. Yamashita, {\em Homology and $K$-theory of dynamical systems I. Torsion-free ample groupoids}, Ergod. Th. \& Dynam. Sys. 42 (2022) 2630--2660.

\bibitem{PY23} V. Proietti and M. Yamashita, {\em Homology and $K$-theory of dynamical systems II. Smale spaces with totally disconnected transversal}, J. Noncommut. Geom. 17(2023), 957--998.

\bibitem{R80} J. Renault, {\em A groupoid approach to $C^*$-algebras}, Lecture Notes in Math. 793, Springer,
Berlin, 1980. 

\bibitem{S} A. Sanchez, {\em On the relation between homology and $K$-theory of \'etale groupoids}, PhD Thesis 2022.

\bibitem{S14} B. Steinberg, {\em Modules over \'etale groupoid algebras and sheaves}, J. Aust. Math. Soc. 97(2014), 418--429.
   
\bibitem{T06} J-L. Tu, {\em Groupoid cohomology and extensions}, Trans. Amer. Math. Soc. 358(2006), no. 11, 4721--4747.

\bibitem{W} C. A. Weibel, {\em An introduction to homological algebra}, Cambridge University Press 1994.
\end{thebibliography}
\end{document}